\title{Highly sparse surface couplings for subdomain-wise isoviscous Stokes finite element discretizations}
\newtheorem{theorem}{Theorem}
\newtheorem{lemma}{Lemma}
\newtheorem{remark}{Remark}
\def\bs#1{\boldsymbol{#1}}
\def\bu{\bs{u}}
\def\bv{\bs{v}}
\def\bn{\bs{n}}
\def\jump#1{\ensuremath{[\![#1]\!]}}
\def\div{\mathop{\rm div}}
\def\trace{\mathop{\rm tr}}
\def\dev{\mathop{\rm dev}}
\def\sym{\mathop{\rm sym}}
\def\tr{\mathop{\rm tr}}
\def\Id{\mathop{\rm Id}}
\begin{document}
\author{Markus~Huber$^{1}$
\and Ulrich~R\"ude$^{2}$
\and Christian~Waluga$^{3}$ 
\and Barbara~Wohlmuth$^{1}$}
\footnotetext[1]{Institute for Numerical Mathematics (M2), Technische Universit{\"a}t M{\"u}nchen, Boltzmannstrasse~3, D--85748 Garching b. M\"unchen, Germany}
\footnotetext[2]{Department of Computer Science 10, FAU Erlangen-N{\"u}rnberg, Cauerstra\ss{}e 6, D--91058 Erlangen, Germany}
\footnotetext[3]{liNear GmbH, Kackertstr. 11, D--52072 Aachen, Germany; corresponding author}

\maketitle
\begin{abstract}
The Stokes system with constant viscosity can be cast into different formulations by exploiting the incompressibility constraint.
For instance 
the strain in the weak formulation can  be replaced by the gradient 
to decouple 
the  velocity components in the different coordinate directions. 
Thus the discretization of the simplified problem
leads to
fewer nonzero entries in the stiffness matrix.
This is of particular  interest in large scale simulations where
a reduced memory bandwidth requirement
can help to significantly accelerate the computations.
In the case of a piecewise constant viscosity, as it typically arises in multi-phase flows,
or when the boundary conditions involve traction, the situation is more complex,
and one has to treat the cross derivatives in the original
Stokes system with care.
A naive application of the standard vectorial Laplacian results in a
physically incorrect solution, while formulations based on the strain increase the computational effort everywhere, even when the inconsistencies arise only from an 
incorrect treatment in a small fraction of the computational domain.
Here we propose a new approach
that is consistent with the strain-based formulation and preserves the decoupling advantages of the gradient-based formulation in isoviscous subdomains. 
The modification
is equivalent to locally changing the discretization stencils, hence the more expensive discretization is restricted to a lower dimensional interface,
making the additional computational cost asymptotically negligible.
We demonstrate the consistency and convergence properties of the method and show that in a massively parallel setup, the multigrid solution of the resulting discrete systems is faster than for the classical strain-based formulation.
Moreover, we give an application example which is inspired by geophysical research.
\end{abstract}

\section{Introduction}
\label{sec:intro}
%!TEX root = ./2016_Viscosityjumps.tex
The non-isoviscous Stokes equations are an important mathematical model for a wide range of applications and serve as a building block for more complicated fluid models.
In several applications of interest, however,
the model can be considerably simplified by formally decomposing the simulation domain into isoviscous sub-domains. To this end, consider the incompressible Stokes equations
\begin{subequations}
\label{eq:stokes-strong}
\begin{align}
\label{eq:stokes-strong-momentum}
-\div\bs{\sigma}(\bu,p) &= \bs{f},\\
\label{eq:stokes-strong-mass}
\div\bu &= 0,
\end{align}
\end{subequations}
in a polyhedral domain $\Omega \subset \mathbb{R}^d$, $d=2,3$, where $\bu=[u_1,\dots,u_d]^\top$ denotes the velocity, $p$ the pressure, and $\bs{f} = [f_1,\dots,f_d]^\top]$ is an external forcing term. The stress tensor is defined as
\begin{equation}
\label{eq:stress-sym-nabla}
\bs{\sigma}(\bu,p) := 2\mu \sym\nabla\bu - p\cdot\Id,
\end{equation}
where $\sym T := \frac12 (T + T^\top)$ denotes the symmetric part of a
tensor $T \in \mathbb{R}^{d \times d}$, and $\mu$ is a positive scalar
viscosity field. The focus of this work lies on a model setting
resulting from the assumption that the viscosity is a piecewise
constant function with a possibly large contrast, i.e.,
\begin{equation}
\mu = \begin{cases}
\mu_1              &\text{in}~\Omega_1,\\
\mu_2  &\text{in}~\Omega_2
\end{cases}
\end{equation}
with $\mu_1 \geq \mu_2 > 0$.

Such models have relevant applications, for instance in incompressible two-phase flow simulations (cf. \cite{gross2011numerical} and the references therein), or in fundamental geophysical studies; cf. e.g. \cite{HC89,gerya2009introduction,weismueller_15}.

To simplify the exposition, the disjoint subdomains $\Omega_1$ and $\Omega_2$ are assumed to be polyhedral such that $\overline{\Omega} = \overline{\Omega}_1 \cup \overline{\Omega}_2$, and the interface between both subdomains is denoted by $\Gamma_{12} := \partial\Omega_1 \cap \partial\Omega_2$. The equations \eqref{eq:stokes-strong} are then equivalent to the interface formulation
\begin{subequations}
\label{eq:stokes-strong-interface}
\begin{align}
\label{eq:stokes-strong-interface-momentum}
-\mu_i\Delta\bu + \nabla p &= \bs{f}, &&\text{in}~\Omega_i,~i=1,2\\
\label{eq:stokes-strong-interface-mass}
\div\bu &= 0, &&\text{in}~\Omega_i,\\
\label{eq:stokes-strong-interface-continuity}
\jump{\bu} = 0,\quad\jump{\bs{\sigma}(\bu,p)\cdot\bn} & = 0, && \text{on}~\Gamma_{12},\end{align}
\end{subequations}

\noindent which is discussed e.g. in
\cite{QuaVal:1999,ito2006interface,olshanskii2006analysis}. Here
$\bn_i$ stands for the outward pointing unit normal on
$\partial\Omega_i$, and we omit the index where there is no
ambiguity. Moreover, $\jump{v} := v|_{\Gamma_{12}\cap\Omega_1} -
v|_{\Gamma_{12}\cap\Omega_2} $ denotes the oriented jump of a scalar
or vectorial quantity $v$ across the interface, and  $\jump{ T \cdot
  \bn } := T|_{\Gamma_{12}\cap\Omega_1} \cdot \bn_1 +
T|_{\Gamma_{12}\cap\Omega_2} \cdot \bn_2$ stands for the jump of the normal component of a
tensorial quantity $T$.

In terms of boundary conditions, we complement the equations \eqref{eq:stokes-strong-interface} by \emph{homogeneous Dirichlet} conditions $\bu = \mathbf{0}$ on $\Gamma_D \subset \partial \Omega$ and prescribe either \emph{traction-free} boundary conditions
\begin{subequations}
\label{eq:boundary}
\begin{align}
\label{eq:stokes-strong-boundary-tractionfree}
\bs{\sigma}(\bu,p)\cdot\bn & = \mathbf{0}, \qquad \text{on}~\Gamma_{F} \subset \partial \Omega,
\intertext{or \emph{free-slip} boundary conditions}
\label{eq:stokes-strong-boundary-freeslip}
{\bu \cdot \bn} = 0,\quad \bn \times \bs{\sigma}(\bu,p)\cdot\bn & = 0, \qquad \text{on}~\Gamma_{F} \subset \partial \Omega.
\end{align}
\end{subequations}
 As it is standard, we
assume $\Gamma_D \cap \Gamma_F = \emptyset$, $\bar \Gamma_D \cup \bar
\Gamma_F = \partial \Omega$, and the $(d-1)$-dimensional measure of
$\Gamma_D$ is positive such that Korn-type estimates hold. 

The important difference between the single domain  \eqref{eq:stokes-strong} and the multi-domain formulation \eqref{eq:stokes-strong-interface} is that the presence of cross-derivatives in \eqref{eq:stokes-strong-interface} is limited to $\Gamma_{12} $.  At the interface $\Gamma_{12}$, the continuity of the normal stresses is enforced via \eqref{eq:stokes-strong-interface-continuity}. Moreover at the boundary part $\Gamma_F$, the traction is (partly) constrained via 
\eqref{eq:boundary}. Thus in the following, both $\Gamma_F $ and $\Gamma_{12}$ will be called \emph{interface} to simplify notation. The equivalence of \eqref{eq:stokes-strong} and  \eqref{eq:stokes-strong-interface} results from the fact that for solenoidal velocities, we have
\begin{equation}
2\div(\sym\nabla\bu) = \Delta\bu + \nabla \div \bu = \Delta\bu,
\end{equation}
which decouples the velocity components in the momentum equation.
Preserving this decoupling property in a discrete sense is of special interest in large-scale computations since the number of nodes at the interface increases with ${\mathcal O} (h^{-d+1})$ while the total number of degrees of freedom grows with  ${\mathcal O} (h^{-d})$. Thus, asymptotically, the number of nodes
associated with the interior of the subdomains dominates, and the computational cost
can be significantly reduced by using an approach 
that decouples in the interior of the isoviscous subdomains. This observation 
does not only hold for traditional element-wise assembling procedures but also for the matrix-vector product with the stiffness matrix. A fast realization of these basic operations is 
essential to obtain a fast time-to-solution of many iterative solvers such as multigrid methods. It is however not obvious how to exploit the previously discussed decoupling property for large-scale applications.  This fact is related to the observation that interface problems often result in a saddle point 
structure that may give rise to stability concerns.

To elaborate on this, let us choose a natural setting for the
interface problem by defining appropriate function spaces. For the
velocity space we choose $\bs{V} := \{\bv \in H^1 (\Omega)^d~:~
\bv = \mathbf{0} \text{ on } \Gamma_D , \int_{\Gamma_F} \bv \cdot \bn \ ds =0 \}$ and  $\bs{V} := \{\bv \in H^1 (\Omega)^d~:~ \bv
= \mathbf{0} \text{ on } \Gamma_D, \bv \cdot \bn = 0 \text{ on }
\Gamma_F\}$ for \eqref{eq:stokes-strong-boundary-tractionfree}
and \eqref{eq:stokes-strong-boundary-freeslip}, respectively.
We point out that the definition of the velocity space automatically guarantees that
$\int_{\partial \Omega} \bv \cdot \bn \ ds = \int_\Omega \div \bv \ dx =0$, $\bv \in \bs{V} $ and thus for all our settings, we only work with
$Q := \{~q \in L^2(\Omega)~:~\int_\Omega \mu^{-1} q\,{\rm d}x =
0~\}$ as test and solution space for the pressure. This choice  ensures
 uniqueness of the
pressure with respect to the constant mode. 
 The weak problem will then read as: given
$\bs{f} \in \bs{V}'$, find $(\bu,p) \in \bs{V} \times Q$, such that
\begin{subequations}
\label{eq:weaks}
\begin{align} \label{eq:weakmo}
a(\bu,\bv) + b(\bv,p) &= \langle \bs{f},\bv \rangle, && \text{for all}~ \bv \in \bs{V},\\  \label{eq:weakma}
b(\bu,q) &= 0, && \text{for all}~ q \in Q,
\end{align}
\end{subequations}
where $b(\bv,q) := -(\div\bv,q)_0$ is the standard weak form of the divergence and $(\cdot,\cdot)_0$ denotes the $L^2$ scalar product on $\Omega$. The exact form of $a(\cdot,\cdot)$ however depends on the strong formulation on
 which we base our derivation. Starting from \eqref{eq:stokes-strong} leads to the bilinear form
\begin{align} \label{eq:standard}
a_{\sym}(\bu,\bv) := (2\mu\sym\nabla\bu,\sym\nabla\bv) .
\end{align}
By Korn's second inequality, we obtain positive definiteness of $a_{\rm sym}(\cdot,\cdot)$ on $\bs{V}\times\bs{V}$. The drawback of this weak form is that it does not exploit the knowledge that the velocity components are decoupled in the strong momentum balance inside the subdomains, which leads to a reduced sparsity in the finite-dimensional setting.

Another straightforward approach would be to start from the interface formulation \eqref{eq:stokes-strong-interface}. Integrating by parts over each subdomain and incorporating the momentum balance, we obtain a weak Laplacian which is augmented with consistency terms at the interface
\begin{align}
\label{eq:aint}
a_{\rm int}(\bu,\bv) := (\mu\nabla\bu,\nabla\bv) + \sum_{i=1}^2(\mu\nabla\bu^\top\bn,\bv)_{\partial\Omega_i\cap\Gamma_{12}}+ (\mu\nabla\bu^\top\bn,\bv)_{\Gamma_{F}},
\end{align}

\noindent cf. e.g. \cite{LimIdeRosOna:2007}, where a similar construction for the special case of boundary conditions involving the surface traction is considered. The above formulation, while being consistent and decoupled inside the subdomains, has two major drawbacks: Firstly, due to the additional consistency terms, this form sacrifices symmetry. Secondly, it is --
to the authors' best knowledge -- not possible to prove coercivity of
$a_{\rm int}(\cdot,\cdot)$ for large viscosity contrasts. Let us at this point underline that simply neglecting the consistency terms in a discrete solver will, in general, result in convergence to a wrong solution if $\mu_1 \neq \mu_2$ or $\Gamma_F\ne\emptyset$.
Limache~et~al.~\cite{LimIdeRosOna:2007} interpret the lack of consistency as a violation of the principle of objectivity, a basic axiom of continuum mechanics.

In this article, we introduce 
a framework for incompressible flow with subdomain-wise constant viscosities, that exploits the increased sparsity of the vectorial Laplacian and thus -- assuming comparable convergence rates -- allows 
for a significantly faster iterative solution as compared to the classical strain based formulation.
Our goal is to provide a mathematical sound reformulation
in a variationally consistent setting and to report on 
the resulting performance and scalability results. The paper is organized as follows:
In the following Section~\ref{sec:weak-formulation}, we shall derive an
alternative bilinear form which is both consistent and symmetric,
therefore eliminates the major drawback of the form $a_{\rm int}
(\cdot,\cdot)$ defined above. Subsequently, in
Section~\ref{sec:stability} we discuss the circumstances under which
the modified form leads us to a stable and convergent approximation
scheme in case of finite dimensional velocity and pressure
spaces. Particularly, we present and analyze a lowest-order stabilized
method which has a decoupling property built-in and suppresses
spurious pressure modes. In Section~\ref{sec:numerics}, we give some illustrative numerical examples to complement our theory and to report on the discretization error for the different formulations. Section~\ref{sec:large} is devoted to performance considerations. More precisely, we consider the influence of the formulation on the  time to solution for a highly scalable and parallel all-at-once multigrid solver. Finally in Section~\ref{sec:geo}, we present a simplified geophysical model example and show that the new modified scheme yields results which are in agreement with the physically accepted strain-based formulation of the Stokes problem.

\section{A symmetric decoupled weak formulation}
\label{sec:weak-formulation}

In this section, we provide two different weak formulations 
that decouple in the subdomains and involve cross derivative terms only across the interface.
The approach proposed in this work is primarily based on an observation 
that can be traced back at least to Pironneau~\cite[Sect.\,6.4]{pironneau1989finite}, namely, that  we can rewrite
\begin{equation}
\label{eq:pironneau-identity}
(2\sym\nabla\bu,\sym\nabla\bv)_{\Omega_i} - (\div\bu,\div\bv)_{\Omega_i}
  = (\nabla\bu,\nabla\bv)_{\Omega_i} + (\nabla\bu^\top\bn - \div\bu\cdot\bn,\bv)_{\partial\Omega_i}.
\end{equation}

\noindent For a formal proof and suitable assumptions, we refer to Lemma~\ref{lem:equivalence-jump} below. Using \eqref{eq:pironneau-identity}, we can easily derive a symmetric bilinear form 
\begin{align} \label{eq:newbil}
a_{\tr}(\bv,\bu) &:= a_{\sym} (\bv,\bu)
 - ( \mu \div\bu,\div\bv) \\ \nonumber
&= a_{\dev}(\bu,\bv) - (\mu(1-\tfrac2d)\div\bu,\div\bv),
\end{align}

\noindent where $a_{\dev}(\bu,\bv) := (2 \mu \dev \sym\nabla\bu,
\dev\sym\nabla\bv) $.  Here $\dev T := T - \tfrac{1}{d}
\trace(T)\cdot\Id$ denotes the deviatoric part of a tensor $T$. Thus, we directly obtain a decoupling property for the momentum part also in the weak setting. The above modification amounts to formally changing the stress tensor to
\begin{equation}
\label{eq:stress-dev-sym-nabla}
\begin{aligned}
\bs{\sigma}(\bu,p)
&= 2\mu \sym\nabla\bu - (p + \mu\div\bu)\cdot\Id \\
&= 2\mu \dev\sym\nabla\bu - (p + \mu(1-\tfrac2d)\div\bu)\cdot\Id .
\end{aligned}
\end{equation}

 It is obvious that $a_{\tr}(\cdot,\cdot)$ is symmetric, and moreover for
$d=2$ it is also coercive on $\bs{V} \times \bs{V}$ if $\Gamma_D
= \partial \Omega$ with respect to the standard $H^1$-norm, see, e.g.,
\cite[Lemma 4.2]{neff2015poincare}. For solenoidal elements of $\bs{V}$ the bilinear form \eqref{eq:newbil} is equal to \eqref{eq:standard}, and thus the modified problem is strongly consistent with the original Stokes problem \eqref{eq:stokes-strong} due to the incompressibility constraint $\div\bu=0$.

Before we discuss finite element approximations for the corresponding weak problem, we shall discuss two possible equivalent reformulations which can be used for the implementation to exploit the advantageous decoupling property. 

\subsection{Equivalence with a tangential interface coupling}
Before we can state our main results, we need to discuss some preliminary algebraic identities. These identities provide a useful tool to expand a trace-free tensor in terms of a rotational basis. For the two-dimensional setting, $d=2$, we define the orthogonal matrix
\begin{equation*}
R_2^1=\left(\begin{matrix}0&-1\\1&0\end{matrix}\right)
\end{equation*}

\noindent and set $n_d = n_2 = 1$. For the three-dimensional case, $d=3$, we accordingly define
\begin{equation*}
R_3^1 = \left(\begin{matrix}
	0 &  0 &  0 \\
    0 &  0 & -1 \\
    0 &  1 &  0
\end{matrix}\right),
\qquad
R_3^2 = \left(\begin{matrix}
	0 &  0 &  1 \\
    0 &  0 &  0 \\
   -1 &  0 &  0
\end{matrix}\right),
\qquad
R_3^3 = \left(\begin{matrix}
	0 & -1 &  0 \\
    1 &  0 &  0 \\
    0 &  0 &  0
\end{matrix}\right)
\end{equation*}
and set $n_d = n_3 = 3$. For $\mathbf{x} \in \mathbb{R}^2$ we have $R_2^1\mathbf{x} \perp \mathbf{x}$ and for $\mathbf{x} \in \mathbb{R}^3$ we get $R_3^j \mathbf{x} = \mathbf{e}_j \times \mathbf{x}$, where $\{ \mathbf{e}_j, 1 \le j \le 3 \}$, denotes the canonical basis of $\mathbb{R}^3$.

\begin{lemma}
\label{lem:rotation-identity}
Let $T \in \mathbb{R}^{d \times d}$, $d=2,3$. Then we have the equalities
\begin{align}
\label{eq:decomposition}
T - \mathop{\rm tr}\,(T) \cdot \Id &=  \sum_{j=1}^{n_d} R_d^j T^\top R_d^j, \\
\label{eq:traceless-rotation}
\dev{T} & = \sum_{j=1}^{n_d} R_d^j \dev{T}^\top R_d^j. 
\end{align}
\end{lemma}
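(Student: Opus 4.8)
The plan is to reduce the whole lemma to the single identity \eqref{eq:decomposition}, and then prove that one by a short index computation. Indeed, \eqref{eq:traceless-rotation} follows from \eqref{eq:decomposition} by applying the latter to the tensor $\dev T$ in place of $T$. Since $\tr(\dev T) = \tr(T) - \tfrac1d\,\tr(T)\,\tr(\Id) = \tr(T) - \tr(T) = 0$, the correction term on the left-hand side of \eqref{eq:decomposition} vanishes, and one is left with exactly $\dev T = \sum_{j=1}^{n_d} R_d^j\,(\dev T)^\top R_d^j$. Thus no separate argument for \eqref{eq:traceless-rotation} is needed once \eqref{eq:decomposition} is available.

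For the remaining identity \eqref{eq:decomposition} I would treat the two dimensions separately, since the $d=2$ matrix $R_2^1$ is not of cross-product type and the definition of $n_d$ differs. The case $d=2$ is most quickly dispatched by direct computation: writing $T=\left(\begin{smallmatrix}a&b\\c&d\end{smallmatrix}\right)$ one checks that $R_2^1 T^\top R_2^1 = \left(\begin{smallmatrix}-d&b\\c&-a\end{smallmatrix}\right)$, which coincides with $T - \tr(T)\cdot\Id$. Alternatively, since both sides of \eqref{eq:decomposition} are linear in $T$, it suffices to verify the equality on the four basis matrices $E_{kl}$; this is the fallback strategy that also works in three dimensions but becomes tedious there.

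For $d=3$ the clean route is index notation. The key structural fact is that, because $R_3^j\mathbf{x} = \mathbf{e}_j\times\mathbf{x}$, the entries are given by the Levi-Civita symbol, $(R_3^j)_{kl} = \varepsilon_{kjl}$. Computing the $(p,q)$-entry of the right-hand side then yields
\begin{equation*}
\Bigl(\sum_{j=1}^{3} R_3^j T^\top R_3^j\Bigr)_{pq}
  = \sum_{j,k,l} \varepsilon_{pjk}\,(T^\top)_{kl}\,\varepsilon_{ljq}
  = \sum_{k,l} T_{lk}\sum_{j}\varepsilon_{jkp}\,\varepsilon_{jql},
\end{equation*}
where I have used cyclic permutation of the indices in each $\varepsilon$. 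The sum over the repeated middle index is then resolved by the standard contraction identity $\sum_{j}\varepsilon_{jab}\varepsilon_{jcd} = \delta_{ac}\delta_{bd} - \delta_{ad}\delta_{bc}$, giving $\delta_{kq}\delta_{pl} - \delta_{kl}\delta_{pq}$. Substituting and summing over $k,l$ produces $T_{pq} - \tr(T)\,\delta_{pq}$, which is precisely the $(p,q)$-entry of $T - \tr(T)\cdot\Id$.

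The main obstacle is really only bookkeeping: getting the index placement in $(R_3^j)_{kl}=\varepsilon_{kjl}$ right (including the sign convention implicit in the stated matrices $R_3^j$) and applying the $\varepsilon$–$\varepsilon$ contraction to the correct summation index. Once the entries are identified, no genuine difficulty remains, and the deviatoric identity \eqref{eq:traceless-rotation} costs nothing extra by the trace-free reduction above.
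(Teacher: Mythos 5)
Your proposal is correct, and its overall skeleton is the same as the paper's: establish \eqref{eq:decomposition} by direct computation, then deduce \eqref{eq:traceless-rotation} from it. The interesting difference is in the reduction step. The paper writes $\dev T = T - \tr(T)\cdot\Id + \frac{2}{3}\tr(T)\cdot\Id$ and absorbs the extra trace term using the auxiliary identity $\sum_{j=1}^{3} R_d^j R_d^j = -2\Id$, whereas you simply apply \eqref{eq:decomposition} to the tensor $\dev T$ and observe that $\tr(\dev T)=0$ kills the correction term. Your route is cleaner: it needs no auxiliary identity and works verbatim in both $d=2$ and $d=3$, while the paper's manipulation is written for $d=3$. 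The second difference is one of explicitness rather than substance: the paper declares \eqref{eq:decomposition} ``a straightforward computation'' without detail, and you actually carry it out. Your $2\times 2$ verification for $d=2$ is correct, and for $d=3$ the identification $(R_3^j)_{kl}=\varepsilon_{kjl}$ (consistent with $R_3^j\mathbf{x}=\mathbf{e}_j\times\mathbf{x}$), the cyclic rewritings $\varepsilon_{pjk}=\varepsilon_{jkp}$ and $\varepsilon_{ljq}=\varepsilon_{jql}$, and the contraction $\sum_j\varepsilon_{jab}\varepsilon_{jcd}=\delta_{ac}\delta_{bd}-\delta_{ad}\delta_{bc}$ combine exactly as you state to give $T_{pq}-\tr(T)\,\delta_{pq}$; I rechecked the index bookkeeping and it is sound. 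In short: correct, same architecture, with a slicker trace-free reduction for \eqref{eq:traceless-rotation} and a fully explicit Levi-Civita argument where the paper leaves the computation implicit.
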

\begin{proof} Both equalities are easily obtained from a straightforward computation.
For $d=3$, it is helpful to note that $\sum_{j=1}^{3} R_d^j R_d^j = -2 \Id$, and
%\begin{align*}
% (R_d^j T R_d^j)_{lk}= \left\{ 
%\begin{array}{ll}
%0 & l=j \text{ or } k=j \\
%- T_{j_1 j_1} & l=k=j_2 \\
%- T_{j_2 j_2} & l=k=j_1 \\
%T_{kl} & \text{ otherwise }
%\end{array} \right.   , \quad  \{j_1,j_2,j\} = \{1,2,3\},
%\end{align*}
%and consequently 
%\eqref{eq:decomposition} follows by summation.
\begin{align*}
\dev{T} & = T - \mathop{\rm tr}\,(T) \cdot \Id + \frac{2}{3} \mathop{\rm tr}\,(T) \cdot \Id= \sum_{j=1}^3 R_d^j  T^\top  R_d^j 
- \frac{1}{3} \mathop{\rm tr}\,(T)  \sum_{j=1}^3 R_d^j  R_d^j 
\\
&  =
\sum_{j=1}^3 R_d^j ( {T}^\top - \frac 13 \mathop{\rm tr}\,(T^\top) \cdot \Id) R_d^j 
 =
\sum_{j=1}^3 R_d^j \dev{T}^\top  R_d^j,
\end{align*}
thus \eqref{eq:traceless-rotation} follows from \eqref{eq:decomposition} 
and the definition of the deviatoric operator.
\end{proof}

The previous identities are essential for our derivation as we will show next. Let
${\mathcal T}_h (\Omega_i)$ be a family of shape regular triangulations on $\Omega_i$ such that each vertex on $\Gamma_{12}$ is a vertex of both ${\mathcal T}_h (\Omega_1)$ and ${\mathcal T}_h (\Omega_2)$, with at least one inner vertex on $\Gamma_{12}$.

\begin{lemma}
\label{lem:equivalence-jump}
For all $\bu, \bv \in \bs{H}^1(\Omega_i) \cap \prod_{T\in\mathcal{T}_h(\Omega_i)}\bs{H}^2(T)$ there holds:
\begin{equation}
\label{eq:pironneau-identity-2}
(\nabla\bu^\top - \mathrm{div}\,\bu \cdot \Id, \nabla\bv)_{\Omega_i} = (\nabla\bu^\top \bn - \mathrm{div}\,\bu \cdot \bn,\bv)_{\partial\Omega_i}.
\end{equation}
Moreover, we can equivalently rewrite the surface terms by tangential derivatives as
\begin{equation}
\label{eq:tangential-identity}
(\nabla\bu^\top\bn - \mathrm{div}\,\bu\cdot\bn, \bv)_{\partial\Omega_i}
= -\sum_{j=1}^{n_d}(\nabla\bu\,R_d^j \bn,R_d^j \bv)_{\partial\Omega_i}.
\end{equation}
\end{lemma}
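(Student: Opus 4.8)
The two identities are of a different nature, so I would treat them separately and, logically, establish \eqref{eq:tangential-identity} first, since it is the purely algebraic statement and it feeds directly into the proof of \eqref{eq:pironneau-identity-2}.

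For the tangential reformulation \eqref{eq:tangential-identity}, the plan is to invoke Lemma~\ref{lem:rotation-identity} with the choice $T = \nabla\bu^\top$. Since $\tr(\nabla\bu^\top) = \div\bu$ and $(\nabla\bu^\top)^\top = \nabla\bu$, the decomposition \eqref{eq:decomposition} reads $\nabla\bu^\top - \div\bu\cdot\Id = \sum_{j=1}^{n_d} R_d^j \nabla\bu\, R_d^j$. Applying this tensor identity to the normal $\bn$ gives the pointwise relation $\nabla\bu^\top\bn - \div\bu\cdot\bn = \sum_j R_d^j \nabla\bu\, R_d^j\bn$ on $\partial\Omega_i$. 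It then remains to pair with $\bv$ and shift one rotation onto the test function: using the antisymmetry $(R_d^j)^\top = -R_d^j$, one has $(R_d^j\bs{w},\bv) = -(\bs{w},R_d^j\bv)$ with $\bs{w} = \nabla\bu\,R_d^j\bn$, which produces exactly the right-hand side of \eqref{eq:tangential-identity}. This part is a short computation with no analytic content.

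The divergence identity \eqref{eq:pironneau-identity-2} is where the regularity assumptions enter, and I would prove it by element-wise integration by parts. Writing the left-hand integrand in components as $(\partial_i u_j - \div\bu\,\delta_{ij})\partial_j v_i$ and integrating by parts on a single element $T \in \mathcal{T}_h(\Omega_i)$ -- which is legitimate because $\bu|_T \in \bs{H}^2(T)$ -- the volume contributions collapse: the term generated by $\partial_i u_j$ yields $\partial_i\partial_j u_j = \partial_i(\div\bu)$, which is exactly cancelled by the term generated by $\div\bu\cdot\Id$, so the mixed derivatives commute away and only the facet integrals $(\nabla\bu^\top\bn - \div\bu\cdot\bn,\bv)_{\partial T}$ remain.

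Summing over all elements, the facets split into those lying on $\partial\Omega_i$, which assemble the desired right-hand side, and the interior facets, whose contributions must be shown to cancel. This cancellation is the main obstacle, since $\bu$ is only globally $\bs{H}^1(\Omega_i)$ and its gradient may jump across interior facets, so the naive normal flux is two-valued. The resolution is to rewrite each facet integrand through the already-established identity \eqref{eq:tangential-identity}: the integrand then depends on $\bu$ only via the tangential derivatives $\nabla\bu\,R_d^j\bn$, because $R_d^j\bn$ is tangent to the facet. Since the trace of $\bu \in \bs{H}^1(\Omega_i)$ is single-valued across an interior facet, so is its tangential derivative; together with the orientation flip $\bn \mapsto -\bn$ (and hence $R_d^j\bn \mapsto -R_d^j\bn$) on the two adjacent elements, the two contributions on each interior facet are equal and opposite and cancel. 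What remains is precisely the boundary term in \eqref{eq:pironneau-identity-2}, which completes the argument.
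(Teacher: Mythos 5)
Your proof is correct and follows essentially the same route as the paper's: element-wise integration by parts exploiting $\nabla\div\bu - \div\nabla\bu^\top = \mathbf{0}$ on each $T$, with the interior-facet cancellation justified through the rotation identity \eqref{eq:decomposition} applied to $T = \nabla\bu^\top$, which simultaneously yields \eqref{eq:tangential-identity}. The only difference is organizational: you establish the tangential identity first as a standalone algebraic step and then invoke it for the facet cancellation, whereas the paper obtains it as a side product of the same computation.
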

\begin{proof}
The identity \eqref{eq:pironneau-identity-2}, which readily implies \eqref{eq:pironneau-identity}, can be found in \cite[Sect.\,6.4; eq.\,(130)]{pironneau1989finite}. Since the proof is omitted there, we shall give one here for the sake of completeness:
We integrate by parts on each element inside $\Omega_i$ and notice that, due to the regularity requirements, we have $\nabla\mathrm{div}\,\bu - \mathrm{div}\,\nabla\bu^\top = \mathbf{0}$ in every $T \in \mathcal{T}_h (\Omega_i)$. By this, we find after summation that
\begin{align*}
(\nabla\bu^\top - \mathrm{div}\,\bu \cdot \Id, \nabla\bv)_{\Omega_i} = \sum_{T \in
\mathcal{T}_h (\Omega_i)}
 (\nabla\bu^\top \bn - \mathrm{div}\,\bu\cdot\bn, \bv)_{\partial T} .
\end{align*}
Now it is important to see that $\nabla\bu^\top\cdot\bn - \mathrm{div}\,\bu\cdot\bn$ is a derivative acting only in tangential direction, and hence the contributions of each facet in the interior of $\Omega_i$ cancel out due to the inverse orientation of their normals. %For $d=2$ it is easy to compute by hand that $\bv^T(\nabla\bu^\top - \mathrm{div}\bu \cdot I_2)\bn = -(R_2^1\bv)^T \nabla\bu (R_2^1\bn)$.
To show this, we make use of the identity \eqref{eq:decomposition} with 
 $T=\nabla\bu^\top$, which directly yields the second identity as a side product.
\end{proof}

Given the above considerations and defining the tangential vectors $\bs{t}_j := R_d^j \bn$, $j=1,\dots,n_d$, and $\bs{v}^\perp_j := R_d^j \bs{v}$, it is easy to see that on $\bs{V} \times \bs{V}$
  the bilinear form $a_{\tr}(\bu,\bv) $ can be equivalently written as
\begin{equation} \label{eq:decouple}
a_{\tr}(\bu,\bv) = (\mu\nabla\bu,\nabla\bv) - \jump{\mu} \sum_{j=1}^{n_d}(\partial_{\bs{t}_j}\bu,\bv^\perp_j)_{\Gamma_{12}} - \mu \sum_{j=1}^{n_d}(\partial_{\bs{t}_j}\bu,\bv^\perp_j)_{\Gamma_{F}}.
\end{equation}

\noindent This formulation allows us to see that the additional couplings at the interface are in fact only introduced in tangential direction, i.e, in conforming velocity spaces, the cross-couplings are restricted to the coefficients at the interface.

\subsection{Equivalence with a modified interface stencil}

Although the representation \eqref{eq:decouple} allows us to 
avoid cross derivative terms within the subdomain, it requires the assembly of interface terms which may not be straightforwardly supported in existing codes. We shall thus shed some light on an alternative interpretation of the interface treatment.

In massively parallel large-scale simulations, it is becoming increasingly important
to develop matrix-free implementations since they do not only
save the memory for storing the full operator but also the effort to retrieve the matrix entries 
in each iteration from memory which is often the dominating 
cost factor. 
In particular, matrix-free implementations are of interest for hierarchically refined grids
that result in patch-wise uniform fine meshes; cf. e.g. \cite{bergen-gradl-ruede-huelsemann_2006,gmeiner-ruede-stengel-waluga-wohlmuth_2015,gmeiner-ruede-stengel-waluga-wohlmuth_2015_2} and the references contained therein. Here, assuming constant coefficients, the interior stencils can be efficiently pre-computed for the coarse mesh elements and can be applied on the fly on the fine mesh. A modified approach is thus in our context only necessary at the interface, where we propose to express the contributions of the bilinear form $a_{\tr} (\cdot,\cdot)$ in a stencil-based fashion.

Let us consider a finite-dimensional space $\mathbf{V}_h\subset\mathbf{V}$. Moreover let us assume that we can decompose $\bv_h = \bv_h^\circ + \widehat{\bv}_h$ such
that $\bv_h^\circ \in \bs{V}_h$ and $\bv_h^\circ|_{\Omega_i} \in
\bs{H}^1_0(\Omega_i),~i=1,2$. It is easy to see that we obtain
\begin{equation}
\label{eq:equivalence-stencil}
a_{\tr}(\bu_h,\bv_h) = (\mu\nabla\bu_h, \nabla\bv^\circ_h)_\Omega + a_{\tr}(\bu_h,\widehat{\bv}_h),
\end{equation}

\noindent where we use \eqref{eq:newbil} for the evaluation of $ a_{\tr}(\bu_h,\widehat{\bv}_h)$.

The conceptual sketch in Fig. \ref{fig:cross} illustrates the coupling of the
x- and y-velocity components in two dimensions. The standard bilinear form
\eqref{eq:standard} depicted in the second picture from the left
results in a fully coupled setting, while the bilinear form \eqref{eq:newbil}
yields a reduced coupling in the components. We note that a closer look reveals that both stiffness matrices resulting from \eqref{eq:decouple}
 and \eqref{eq:equivalence-stencil} are mathematically equal, but differ in their  sparsity structure in a stencil based code which is illustrated by the translucent/green box. 
 
\begin{figure}[ht]
\centering
\includegraphics[width=0.8\textwidth]{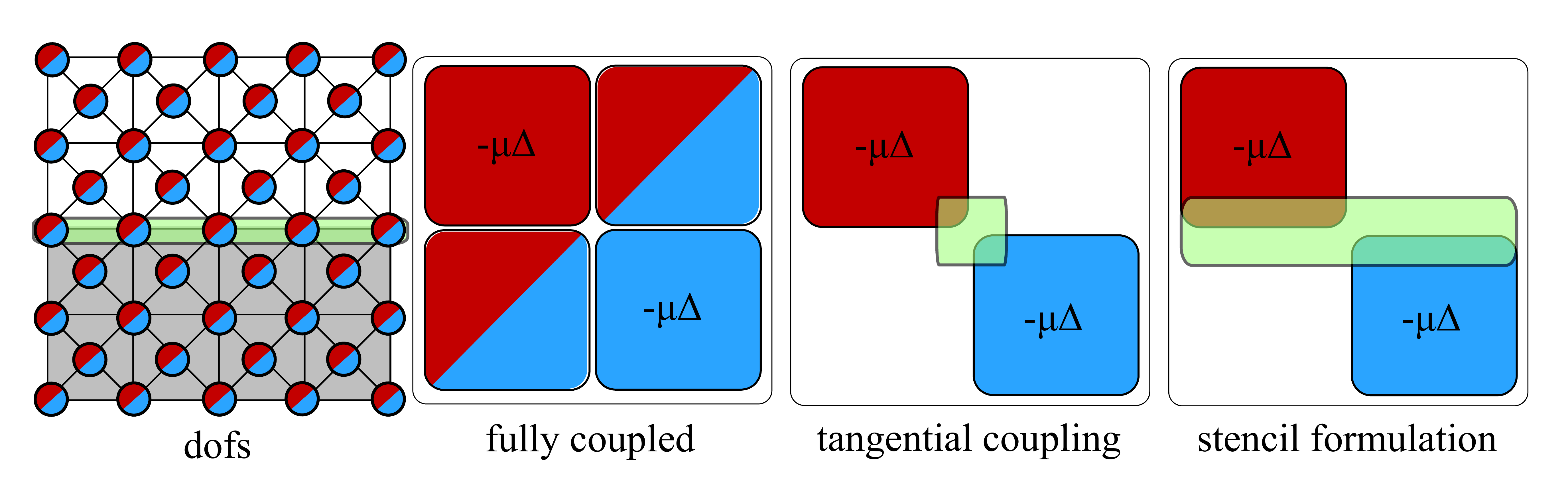}
\caption{An illustration of different types of cross derivative couplings.}
\label{fig:cross}
\end{figure} 

Using \eqref{eq:decouple}, we obtain
a stiffness matrix where only the nodes at the interface couple in x
and y direction, see the second picture from the right. Using the equivalent form \eqref{eq:equivalence-stencil} results in the picture on the right where formally also coupling terms between the subdomain velocity components and interface test functions occur. Although, the element-wise assembled contributions cancel out within the global assembling process, the stencil entry is formally filled by a zero and contributes to the
floating point operations (FLOPS) as well as the memory footprint of the operator application. However, since the additional entries are only relevant at the interface, these extra costs can be asymptotically neglected. From the point of view of stencil-based codes, the application of the viscous operator can thus be realized by applying the Laplacian stencils inside the domains of same viscosity and the full stencil only at the interface $\Gamma_{12} $ between the subdomains with different viscosity and on $\Gamma_F$ 
to ensure consistency. Hence, this decoupling of the velocities inside each subdomain allows for an efficient matrix vector multiplication as required for 
many smoothers in the multigrid context, since each velocity component can be treated independently.

\begin{remark}
The decoupling property is not restricted to settings in which the mesh resolves the interface. For instance, in two-phase flow computations, where the interface is described by a level-set function, the same techniques can be used. In this case, we have to differentiate between shape-functions which are supported at the interface and those which are not. Given such a decomposition, the decoupling mechanisms described above can be straightforwardly applied to increase the sparsity of the global stiffness matrix. The computational effort of 
identifying  the corresponding interface degrees of freedom 
will anyway be needed for 
the element based assembly routines
so that this technique can be directly incorporated without any
significant extra cost. The decoupling property also generalizes to
higher order finite elements.
\end{remark}

\section{Stability and uniform coercivity in the discrete setting}
\label{sec:stability}
%!TEX root = ./2016_Viscosityjumps.tex
%
By construction, the saddle point problem \eqref{eq:weaks} with $a(\cdot,\cdot)$
given by $a_{\tr}(\cdot,\cdot)$ is consistent.
 We associate with $(\bv,q)  \in \bs{V} \times Q$ the norm
\begin{align*}
\| (\bv, q) \|^2 &:= \| \bv \|_{\bs{V}}^2 +\| q \|_Q^2, \\
 \| \bv \|_{\bs{V}}^2 &:= a_{\sym}(\bv, \bv) , \qquad \| q\|_Q^2 :=(q,q)_Q, \quad (p,q)_Q := ( p/(2 \mu), q)_0 
\end{align*} 

\noindent and consider additionally on $[H^1(\Omega)]^d $ the semi-norm $| \bv |_{\bs{V}}^2 := a_{\dev} (\bv,\bv)$. 
\begin{remark} The choice of our boundary conditions and Korn's inequality 
guarantee that $\| \cdot \|_{\bs{V}}$ is equivalent to the standard $H^1$-Hilbert space norm on $\bs{V}$. However, in general, the constants in the bounds depend on $\mu$.  If on $\Omega_1$, i.e., on the  subdomain with the larger viscosity, a non-trivial Dirichlet boundary part is prescribed,
then the norm $\| \cdot \|_{\bs{V}} $ is equivalent to $(2 \mu \nabla \bv, \nabla \bv)_0^{0.5}$ with constants independent of $\mu$. The inf-sup condition on $\bs{V}\times Q$ is shown in \cite{olshanskii2006analysis} with respect to the norm pairing $(2 \mu \nabla \cdot, \nabla \cdot)_0^{0.5}$ and $\| \cdot \|_Q$, and thus it automatically holds true
for  the pairing $\| \cdot \|_{\bs{V}}$ and $\| \cdot \|_Q$.
\end{remark}

Introducing the subspace of solenoidal velocities $\bs{V}_0 := \{ \bv \in \bs{V}~:~ \div \bv =0 \}$, it is trivial to see that
$$
a_{\sym}(\bv,\bv) = a_{\dev} (\bv,\bv) = a_{\tr}(\bv,\bv), \quad \bv \in \bs{V}_0,$$
and consequently 
 the saddle point problem \eqref{eq:weaks} with $a(\cdot,\cdot)$
given by $a_{\tr}(\cdot,\cdot)$ is stable in the continuous setting. However, the discrete setting does not necessarily inherit this property.

Before we consider stable discretizations of the saddle point problem, we comment on the semi-norm $| \cdot |_{\bs{V}}$. According to \cite[Lemma 4.2]{neff2015poincare}, $| \cdot |_{\bs{V}}$ is equivalent to the standard $H^1$-Hilbert space norm on $\bs{V}$ for $d=3$. Moreover the equivalence  to $(2 \mu \nabla \bv, \nabla \bv)_0^{0.5}$ is uniform with respect to $\mu$ if on $\Omega_1$ a non-trivial Dirichlet boundary part is prescribed. 
However, this does not hold for $d=2$.
In the planar setting, the equivalence of  $| \cdot |_{\bs{V}}$  to  the standard $H^1$-Hilbert
space norm on $\bs{V}$ is only guaranteed if $\Gamma_D = \partial
\Omega$. This difference in the equivalence is related to the fact
that the kernel $\mathcal{N}$ of the operator $\dev \sym \nabla$ is  infinite
dimensional for $d=2$ but finite dimensional for $d=3$. We refer to
\cite{Pom11} for a sequence serving as counterexample in $d=2$. For
$d=3$, the dimension of  $\mathcal{N}$ equals $10$, including the six
dimensional space of rigid body modes.
Since $\div  \mathcal{N} \neq \{ 0 \}$, we can find a $\bv \in  \mathcal{N}$
such that 
$a_{\tr}(\bv,\bv) < 0$. Let us consider as example
$\bv := \mathbf{x}$: we then get $a_{\dev}(\bv,\bv) = 0$ and $a_{\tr}(\bv,\bv) = -3 (\mu_1 | \Omega_1 | +
\mu_2 | \Omega_2|) < 0$. 
In contrast
to  $a_{\dev}(\cdot,\cdot)$, the bilinear form $a_{\tr}(\cdot,\cdot)$ 
is not positive semi-definite on $[H^1(\Omega)]^3$ and does not define a semi-norm.

In the light of the previous considerations, it becomes obvious that the stability of the previously discussed modified schemes is not straightforward to establish if the incompressibility constraint cannot be strongly satisfied given the combination of finite element spaces $\mathbf{V}_h \times Q_h \subset \mathbf{V} \times Q$ at hand. Such mixed spaces, where $\div\mathbf{V}_h=Q_h$ are not very wide-spread in practice since they require higher-order ansatz functions and/or special meshes, cf. e.g. the discussions in \cite{scott1985norm,zhang2005new,Boffi:2012aa,CouBorLeLinRebWan:2013}. In particular, for the case $d=3$, the stability analysis for the so-called Scott--Vogelius spaces has been an open problem for almost three decades; cf. the discussion in \cite{neilan2015discrete} for recent progress.

In the following we shall thus demonstrate techniques that allow to prove stability of our modified scheme for stabilized finite element discretizations.

\subsection{Stabilized lowest-order discretization}
\label{sec:stab-p1p0}

Let us recall that the computational domain $\Omega$ is subdivided into a conforming simplicial mesh $\mathcal{T}_h$ 
that resolves the interface. For the velocity discretization, we use linear, conforming finite elements and for the pressure space we employ piecewise constants, i.e., 
\begin{align*}
\bs{V}_h &:= \{ \bv \in  \bs{V} :  \bv|_T \in [P_1(T)]^d,~\forall \, T \in \mathcal{T}_h \}, 
\\
 Q_h  &:= \{ q \in  Q:  q|_T \in P_0(T),~\forall \, T \in \mathcal{T}_h \} .
\end{align*}
 In terms of a suitable stabilization bilinear form $c(\cdot,\cdot) $,
 the discrete weak problem then reads as: given $\bs{f} \in \bs{V}'$, find $(\bu_h,p_h) \in \bs{V}_h \times Q_h$, such that
\begin{subequations} \label{eq:weakdis}
\begin{align} \label{eq:weak-discrete}
B( \bu_h, p_h; \bv_h,q_h) &= \langle \bs{f},\bv_h \rangle,\qquad \text{for all}~ (\bv_h,q_h) \in \bs{V}_h \times Q_h, \\ \label{eq:defB}
B( \bu_h, p_h; \bv_h,q_h) &:= a_{\rm tr}(\bu_h,\bv_h) + b(\bv_h,p_h) +
b(\bu_h,q_h) - c(p_h,q_h) .
\end{align}
\end{subequations}
The following stability analysis is based on a continuous inf-sup condition for the model problem \cite[Thm.\,1]{olshanskii2006analysis} and the use 
of a subdomain-wise defined Scott-Zhang type operator $S_h$ preserving homogeneous boundary conditions \cite{ScoZha:90}. To obtain a globally conforming approximation, 
we also use a suitable boundary modification
for $S_h$ at the interface $\Gamma_{12}$, i.e., the nodal values of
$S_h \bv $ at each node $p $ on $\bar \Gamma_{12} $ are purely defined
in terms of $\bv |_{\bar \Gamma_{12}}$.
\begin{theorem} \label{thm:stab}
Let the stabilization term $c(\cdot,\cdot) $ satisfy
$$
c(q_h,q_h) \leq \gamma_0 \| q_h \|_Q^2, \quad (q_h, \div (\bs{v} - S_h \bs{v}))_0^2
\leq C_{\text{stab}}^2 c(q_h,q_h) \| \sqrt{2\mu} \nabla \bs{v}\|_0^2
$$
with $ C_{\text{stab}} < \infty$ and $\gamma_0 <  \frac{2d}{d - 2} $ for all $q_h \in Q_h$ and $
\bs{v}|_{\Omega_i} \in [H^1_0(\Omega_i)]^d$. Then the variational formulation \eqref{eq:weakdis}  is uniformly stable with respect to the norm $\| (\bv,q)\|$  and has a unique solution. Moreover the finite element error can be bounded by the best approximation error, i.e.,
\begin{align*}
\| (\bu - \bu_h, p - p_h) \| \leq C \inf_{ (\bv,q) \in \bs{V}_h \times Q_h}
\| (\bu - \bv, p - q) \| .
\end{align*}
\end{theorem}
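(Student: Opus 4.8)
The plan is to establish a single uniform inf--sup (Babu\v{s}ka--Brezzi) condition for the combined form $B(\cdot,\cdot;\cdot,\cdot)$ with respect to $\|(\cdot,\cdot)\|$: for every $(\bu_h,p_h)$ I will construct a test pair $(\bv_h,q_h)$ with $\|(\bv_h,q_h)\| \lesssim \|(\bu_h,p_h)\|$ and $B(\bu_h,p_h;\bv_h,q_h) \gtrsim \|(\bu_h,p_h)\|^2$, the hidden constants being independent of $h$ and $\mu$; existence, uniqueness, and the quasi-optimal error bound then follow from the standard theory. The algebraic observation driving the construction is that, for $\bu_h \in \bs{V}_h$, the field $r_h := \mu\,\div\bu_h$ is piecewise constant and weighted mean-free (since $\int_\Omega\div\bu_h=0$), hence $r_h \in Q_h$, and it satisfies $-b(\bu_h,r_h) = \|\sqrt{\mu}\,\div\bu_h\|_0^2$ together with $\|r_h\|_Q^2 = \tfrac12\|\sqrt{\mu}\,\div\bu_h\|_0^2$.

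First I would treat the velocity and the stabilization by testing with $(\bv_h,q_h)=(\bu_h,-p_h-\lambda r_h)$. Using the splitting $a_{\tr}(\bu_h,\bu_h)=a_{\dev}(\bu_h,\bu_h)-(1-\tfrac2d)\|\sqrt{\mu}\,\div\bu_h\|_0^2$ from \eqref{eq:newbil} together with the identities above, a direct computation gives
\begin{align*}
B(\bu_h,p_h;\bu_h,-p_h-\lambda r_h) = a_{\dev}(\bu_h,\bu_h) + (\lambda-1+\tfrac2d)\|\sqrt{\mu}\,\div\bu_h\|_0^2 + c(p_h,p_h) + \lambda\,c(p_h,r_h).
\end{align*}
I would then bound the cross term $\lambda\,c(p_h,r_h)$ by Young's inequality, using the first stabilization hypothesis in the form $c(r_h,r_h)\le\gamma_0\|r_h\|_Q^2=\tfrac{\gamma_0}2\|\sqrt{\mu}\,\div\bu_h\|_0^2$, and optimize over $\lambda$ and the Young parameter $\eta<2$. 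The resulting coefficient of $\|\sqrt{\mu}\,\div\bu_h\|_0^2$ is positive precisely when $\tfrac{2}{\gamma_0}>1-\tfrac2d$, that is, exactly under the hypothesis $\gamma_0<\tfrac{2d}{d-2}$; this is where that constant enters. Since $a_{\sym}(\bu_h,\bu_h)=a_{\dev}(\bu_h,\bu_h)+\tfrac2d\|\sqrt{\mu}\,\div\bu_h\|_0^2$ is a pointwise, weight-respecting identity and $\|\bu_h\|_{\bs{V}}^2=a_{\sym}(\bu_h,\bu_h)$ by definition, a positive divergence coefficient upgrades the right-hand side to $c_1\|\bu_h\|_{\bs{V}}^2+c_2\,c(p_h,p_h)$ with $\mu$-uniform $c_1,c_2>0$, \emph{without invoking Korn's inequality}.

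Next I would recover the pressure. By the continuous inf--sup condition of \cite{olshanskii2006analysis} there is $\bs{w}\in\bs{V}$ with $(\div\bs{w},p_h)_0\ge\beta\|p_h\|_Q^2$ and $\|\sqrt{2\mu}\nabla\bs{w}\|_0=\|p_h\|_Q$. Setting $\bs{w}_h:=S_h\bs{w}$ and testing with $(-\bs{w}_h,0)$ yields, after adding and subtracting $\bs{w}$,
\begin{align*}
B(\bu_h,p_h;-\bs{w}_h,0) \ge \beta\|p_h\|_Q^2 - |(\div(\bs{w}-\bs{w}_h),p_h)_0| - |a_{\tr}(\bu_h,\bs{w}_h)|.
\end{align*}
Here the second stabilization hypothesis controls the consistency error by $C_{\text{stab}}\sqrt{c(p_h,p_h)}\,\|p_h\|_Q$, the $\mu$-uniform $H^1$-stability of the subdomain-wise Scott--Zhang operator gives $\|\bs{w}_h\|_{\bs{V}}\lesssim\|p_h\|_Q$, and continuity of $a_{\tr}$ bounds the last term by $\|\bu_h\|_{\bs{V}}\|p_h\|_Q$. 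Finally I would combine the two tests as $(\bv_h,q_h)=(\bu_h-\delta\bs{w}_h,-p_h-\lambda r_h)$ and absorb all $\delta$-cross terms by Young's inequality; for $\delta$ sufficiently small this produces the full lower bound $c_0(\|\bu_h\|_{\bs{V}}^2+\|p_h\|_Q^2)$, i.e. the uniform inf--sup estimate. The asserted error bound then follows from continuity of $B$ (each of $a_{\tr}$, $b$, $c$ is bounded with respect to $\|\cdot\|$), the just-proven stability, and a Strang-type argument accounting for the stabilization consistency term $c(p,q_h)$.

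I expect the main obstacle to be the velocity step, namely extracting definiteness from the \emph{indefinite} form $a_{\tr}$: the crucial point is to recognize $r_h=\mu\,\div\bu_h$ as an admissible discrete pressure test that turns the harmful term $-(\mu\,\div\bu_h,\div\bu_h)$ into a sign-controllable quantity, and then to track the constants through Young's inequality sharply enough to land exactly on the threshold $\gamma_0<\tfrac{2d}{d-2}$. A secondary delicate point is justifying that the second stabilization hypothesis applies to the interpolation error $\bs{w}-S_h\bs{w}$, whose subdomain traces do not vanish; this hinges on the boundary-modified, subdomain-wise construction of $S_h$, which keeps the interior interpolation error admissible while the interface and $\Gamma_F$ contributions are absorbed by the conformity-preserving modification.
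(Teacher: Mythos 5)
Your overall strategy coincides with the paper's: both arguments establish a Babu\v{s}ka-type inf--sup condition for $B$ by testing with a combination of $\bu_h$ itself, a discrete pressure built from $\mu\,\div\bu_h$ (which is indeed admissible in $Q_h$, as you note, since $(\div\bu_h,1)_0=0$), and a Scott--Zhang interpolant of a continuous inf--sup field. Your first step is correct, and your direct optimization showing that the divergence coefficient is positive precisely when $\gamma_0<\tfrac{2d}{d-2}$ is a clean way to arrive at exactly the paper's threshold; the paper reaches $\gamma_0<6$ for $d=3$ by the equivalent bookkeeping of applying the continuous inf--sup to the shifted pressure $p_h+\tfrac13\mu\,\div\bu_h$ and testing with $q_h=-p_h-\tfrac23\mu\,\div\bu_h$. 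Your observation that no Korn inequality is needed to pass from $a_{\dev}$ plus the divergence term to $\|\cdot\|_{\bs{V}}$ is also consistent with the paper.

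However, there is a genuine gap in your pressure-recovery step. The second stabilization hypothesis is assumed only for fields with $\bs{v}|_{\Omega_i}\in[H^1_0(\Omega_i)]^d$, whereas the inf--sup field $\bs{w}\in\bs{V}$ has nonvanishing traces on $\Gamma_{12}$ (and possibly $\Gamma_F$), so the hypothesis does not apply to $\bs{w}-S_h\bs{w}$. Your proposed remedy --- that the interface contributions are ``absorbed by the conformity-preserving modification'' of $S_h$ --- fails: the boundary modification only makes $(S_h\bs{w})|_{\bar\Gamma_{12}}$ depend on $\bs{w}|_{\bar\Gamma_{12}}$; it does not make $\bs{w}-S_h\bs{w}$ vanish on the interface, since the trace of $\bs{w}$ is not a discrete function. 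The failure is concrete: elementwise integration by parts gives $(q_h,\div\bs{e})_0=\sum_F\int_F\jump{q_h}\,\bs{e}\cdot\bn\,ds$ for piecewise constant $q_h$, and the stabilization \eqref{eq:stabc} deliberately contains no penalty on pressure jumps across $\Gamma_{12}$, so the interface facets carry contributions (with a viscosity jump on top) that $c(q_h,q_h)$ cannot control with a $\mu$-uniform constant for a generic $\bs{w}$. The paper closes exactly this hole with the structural ingredient your proposal lacks: from the constructive proof of the continuous inf--sup condition in \cite{olshanskii2006analysis}, combined with the stability of $\bs{V}_h$ with respect to the subdomain-wise constant pressure subspace, the inf--sup field can be chosen as $\bs{z}=\bs{z}_h+\bs{z}_0$ with $\bs{z}_h\in\bs{V}_h$, $\bs{z}_0|_{\Omega_i}\in[H^1_0(\Omega_i)]^d$, and $\|\sqrt{2\mu}\nabla\bs{z}_0\|_0\lesssim\|\sqrt{2\mu}\nabla\bs{z}\|_0$; then linearity and the projection property of $S_h$ give $\bs{z}-S_h\bs{z}=\bs{z}_0-S_h\bs{z}_0$, and the stabilization hypothesis applies with $\bs{v}=\bs{z}_0$. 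Without this decomposition, the consistency term $(p_h,\div(\bs{w}-S_h\bs{w}))_0$ in your estimate is not controlled, and the uniform inf--sup bound --- hence stability and the quasi-optimal error estimate --- does not follow.
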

\begin{proof}
It is sufficient to show that there exists a $(\bv_h,q_h ) \in  \bs{V}_h \times Q_h $ such that
\begin{align} \label{eq:coer}
\frac{B(\bu_h,p_h;\bv_h,q_h)}{\| (\bv_h,p_h)\|} \geq c \| (\bu_h,p_h ) \| .
\end{align}
 We recall that $\mu $ is piecewise constant and moreover  
$( 2 \mu \div\bu_h, 1)_Q = (\div \bu_h,1)_0 =0$ due to the definition of $\bs{V}$.  
Following \cite[Thm. 1]{olshanskii2006analysis}, there exists 
 $ \bs{z} \in \bs{V}$ such that
\begin{subequations}
\begin{align} \label{eq:prop1}
  (\div \bs{z} ,p_h + (1- \frac 2d) \mu \div \bu_h)_0&  = - \| p_h + (1- \frac 2d) \mu \div \bu_h\|_Q^2\\
\label{eq:prop3} 
 \| \sqrt{2 \mu} \nabla \bs{z}\|_0 &\leq C_1 \| p_h   + (1-\frac 2d) \mu \div \bu_h\|_Q
\end{align}
\end{subequations}
with a constant $C_1 < \infty$ independent of $(\bu_h,p_h)$.
Using the assumption on the triangulation, a closer look reveals that $\bs{z}$ can assume the special form 
$\bs{z} = \bs{z}_h + \bs{z}_0 $ with $\bs{z}_h \in \bs{V}_h $, $\bs{z}_0 |_{ \Omega_i} \in [H_0^1
(\Omega_i)]^2$, $i=1,2$ and $ \| \sqrt{2 \mu} \nabla \bs{z}_0\|_0  \leq
 \tilde C_{\text{stab}}\| \sqrt{2 \mu} \nabla \bs{z}\|_0$.
  This falls out of the constructive proof in
\cite{olshanskii2006analysis} and the fact that $\bs{V}_h $ is stable with respect to the one-dimensional pressure subspace of $Q_h$ being subdomain-wise constant. 
For both $d=2$ and $d=3$, we formally set
 $\bv_h := \bu_h + \alpha S_h \bs{z}$ and show \eqref{eq:coer} if $\alpha$ and $q_h$ are  selected properly.  

Now, we first consider the case $d=3$. 
Using  $q_h := - p_h - 2/3 \mu \div\bu_h  \in Q_h$, the definition \eqref{eq:defB} gives
\begin{align*}
B(\bu_h,p_h;\bv_h,q_h) & =  a_{\dev} ( \bu_h , \bu_h ) -
\frac 13 ( \mu \div \bu_h, \div \bu_h)_0  + c(p_h,p_h)
+ \frac 23 c(p_h, \mu \div \bu_h)\\ & +   \alpha a_{\dev}(  \bu_h ,  S_h \bs{z}) - \alpha 
\frac 13 ( \mu \div \bu_h, \div S_h \bs{z})_0- \alpha (\div S_h \bs{z},p_h)_0 \\
& + (\div \bu_h, p_h)_0  + \frac 23  ( \mu \div \bu_h, \div \bu_h)_0 - 
 (\div \bu_h, p_h)_0 \\
& =  a_{\dev}( \bu_h ,  \bu_h ) +
\frac 13 ( \mu \div \bu_h, \div \bu_h)_0 + c(p_h,p_h) + \frac 23 c(p_h, \mu \div \bu_h)\\ & +   \alpha a_{\dev}(  \bu_h ,
 S_h \bs{z}) - \alpha 
 ( p_h +\frac 13 \mu  \div \bu_h, \div S_h \bs{z})_0.
\end{align*}
Exploiting the fact that $S_h$ is subdomain-wise defined
and $H^1$-stable, we find in terms of \eqref{eq:prop3} that
$
| S_h \bs{z} |_{\bs{V}} \leq C_2 \| p_h   + 1/3 \mu \div \bu_h\|_Q$, 
where $C_2 $ only depends on the $H^1$-stability constant of $S_h$ and $
C_1$. Using \eqref{eq:prop1}, we get
\begin{align*}
B(\bu_h,p_h;\bv_h,q_h)
&\geq \frac 12 |  \bu_h |_{\bs{V}}^2 + \frac 13 \| \sqrt{\mu}
\div \bu_h \|_0^2
+ \alpha ( 1 - \frac 12 C_2^2  \alpha ) \| p_h + \frac 13 \mu \div \bu_h \|_Q^2 
\\& + c(p_h,p_h) + \frac 23 c(p_h, \mu \div \bu_h) - \alpha (p_h + \frac 13 \mu \div \bu_h ,
\div (S_h \bs{z} - \bs{z}))_0.
\end{align*}
Due to the linear character of $S_h$, we have $S_h \bs{z}= \bs{z}_h +
S_h \bs{z}_0$ and $ \bs{z} - S_h \bs{z} = \bs{z}_0 -S_h \bs{z}_0$,
and thus we can make use of the assumption on the stabilization term.
 Setting $C_3 := C_1 C_{\text{stab}} \tilde  C_{\text{stab}}^2 /2$, a straightforward computation yields
\begin{align*}
B(\bu_h,p_h;\bv_h,q_h)
&\geq \frac 12 | \bu_h |_{\bs{V}}^2 + \frac 13 \| \sqrt{\mu} \div \bu_h \|_0^2
+ \alpha ( 1 - \frac 12 (C_2^2+ C_3^2)  \alpha   ) \| p_h + \frac 13 \mu  \div \bu_h \|_Q^2 
\\& + c(p_h,p_h) + \frac 23 c(p_h, \mu \div \bu_h) - c (p_h + \frac 13 \mu \div \bu_h ,
p_h + \frac 13 \mu \div \bu_h ) \\
&=
\frac 12 |  \bu_h |_{\bs{V}}^2 + \frac 16 (\| 2\mu \div \bu_h \|_Q^2 - \frac 16
c(2\mu \div \bu_h, 2\mu \div \bu_h)) \\
&+ \alpha ( 1 - \frac 12 (C_2^2+ C_3^2)  \alpha   ) \| p_h + \frac 13 \mu  \div \bu_h \|_Q^2 .
\end{align*}
Under the continuity assumption on the bilinear form $c(\cdot,\cdot) $
  and by selecting $\alpha$ small enough, we find
$$
B(\bu_h,p_h;\bv_h,q_h)  \geq c ( |   \bu_h |_{\bs{V}}^2 +  \| p_h  \|_Q^2 + \| 2 \mu \div \bu_h\|_Q^2 ) \geq c ( \|   \bu_h \|_{\bs{V}}^2 +  \| p_h  \|_Q^2  ) , $$
 whereas $\|(\bv_h, q_h)\| \leq c ( \|\bu_h \|_{\bs{V}} + \| p_h \|_Q +
\| 2 \mu \div \bu_h\|_Q) \leq c ( \|\bu_h \|_{\bs{V}} + \| p_h \|_Q )$.

The proof for $d=2$ with the weaker assumption on $\gamma_0$, i.e., $\gamma_0<\infty$, follows essentially the same lines. Here we instead use $q_h := - p_h - \beta \mu \div\bu_h  \in Q_h$ with $\beta > 0$ small enough and exploit the fact that $a_{\tr} (\cdot,\cdot) = a_{\dev} (\cdot,\cdot)$. \end{proof}

\begin{remark} We point out that the upper bound for $\gamma_0 $ in Theorem
\ref{thm:stab} depends strongly on $d$. For $d=2$, only continuity of $c(\cdot,\cdot)$ with respect to the pressure norm is required.
\end{remark}

\begin{remark}
The proof shows that if on $\Omega_1$ a non-trivial Dirichlet boundary
part is prescribed, the assumption $\gamma_0 < 6$ in 3D can be weakened
since then $| \cdot |_{\bs{V}} $ and $ \| \cdot \|_{\bs{V}} $ are
equivalent and $\| 2 \mu \div \bv \|_Q $ is bounded by $| \bv
|_{\bs{V}} $. Exploiting this equivalence, we
 find an upper bound for $\gamma_0$ being larger
than 6 but depending on a domain-dependent  Korn-type constant which is in general unknown.
\end{remark}

\subsection{An interior-penalty stabilization and local postprocessing}
For $c(\cdot,\cdot) $ there are different options, e.g., we could choose a form which is related to the schemes discussed in \cite{hughes1987new}:
\begin{align}
\label{eq:stabc}
 c(p, q) := \gamma \sum_{i=1}^2  \frac{1}{2 \mu_i} \sum_{F\in\mathcal{F}_h (\Omega_i)}
 \frac{|T_1^F|\cdot|T_2^F|}{|T_1^F|+|T_2^F|}
 \jump{p}|_F \jump{q}|_F, \quad \gamma > 0
\end{align}
where $\mathcal{F}_h (\Omega_i)$
 is the set of interior facets of the isoviscous
subdomain $\Omega_i$ (i.e., facets on  $\partial\Omega_i$ are excluded), and
$T_1^F$, $T_2^F \in {\mathcal T}_h (\Omega_i)$ such that $\partial T_1^F
\cap T_2^F = \bar F$. We point out that we 
do not penalize a pressure jump across
the interface $\Gamma_{12}$.

A straightforward
computation shows that if $\gamma < 3/2$ for $d=3$ then the continuity assumption
of Theorem \ref{thm:stab} is satisfied. Integration by part and the
properties of the Scott--Zhang operator with a proper scaling argument
guarantee the second assumption on $c(\cdot,\cdot)$.

\begin{remark}
Let ${\mathcal T}_h$ be a triangulation which is obtained by uniformly refining 
a mesh ${\mathcal T}_{4h}$ twice according to the procedure described in \cite{Bey:1995fk}. Then, since the pair $\bs{V}_h \times Q_{4h}$ is uniformly inf-sup stable, we can exclude those sub-faces of $\mathcal{F}_h$, which result from a uniform refinement of the faces of the base mesh ${\mathcal T}_{4h}$. Consequently, the choice of $\bs{z}$ in the proof of Theorem \ref{thm:stab} has to be 
adapted, and $\bs{z}_0$ has to be selected such that it is equal to zero on the boundaries of all elements in  ${\mathcal T}_{4h}$. 
\end{remark}

As already pointed out hierarchically refined grids which result in patch-wise uniform fine meshes are of interest for massively parallel large scale simulations. Although stabilized $P_1-P_0$ elements perform well with respect to computational cost and accuracy, the stabilization results in a loss of strong mass conservation. This turns out to be a possible disadvantage in coupled multi-physics problems where the  velocity enters into an advective term. However, using a simple flux correction, we can easily recover exact element-wise mass-conservation in a local post-process.
For convenience of the reader, we shall outline it here for completeness:
For each facet $F$, we define an oriented facet flux by
$$
j_{F;T} := \bu_h \cdot \bn_T  +  \Delta j_{F;T} , \quad \Delta j_{F,T} :=
\frac{\gamma}{2 \mu | F|}
 \left(\frac{|T|\cdot|T^F|}{|T|+|T^F|}\right)
 (p_h |_T - p_h |_{T^F}), 
$$
where $ \bn_T$ is the outer unit normal with respect to the element $T$ and
$\partial T \cap \partial T^F = F$. Then, definition \eqref{eq:stabc} guarantees  local mass conservation, i.e.,
$\sum_{F \subset T} \int_F j_{F;T} \ ds =0$. For many coupled problems it is sufficient to have a weak mass balance, i.e., to have access to $\int_F j_{F;T} \ ds$. If this is not the case, we can also lift the post-processed facet flux onto a BDM element; cf. e.g. \cite{boffi2013mixed} for further reading. In the 
case of continuous pressure approximations, this type of local post-process is also possible \cite{GWW14}.

\subsection{Possible generalizations}

So far we restricted ourselves to the stabilized $P_1-P_0$ setting.
However, as the proof of Theorem \ref{thm:stab} shows, we did not directly exploit the fact that we had been working with a piecewise constant pressure and a conforming piecewise linear velocity. The characteristic feature for the proof 
of our considered pairing is that we have  $\div \bs{V}_h \subset Q_h$.
Thus with minor modifications, the proof can be generalized to higher order stabilized 
$P_k - P_{k-1} $ pairings provided that a suitable stabilization is specified.

Let us also briefly discuss the extension to other mesh types: We assume that ${\mathcal T}_h$ is a family of uniformly refined conforming hexahedral (quadrilateral) meshes and that we use trilinear (bilinear) finite elements in 3D (2D) for the velocity. 
The main difference between the $Q_1-P_0$ and the $P_1-P_0$ setting is that for
the former case we do not have $\div \bs{V}_h \subset Q_h$. As a
consequence, we cannot directly apply the techniques of the proof of
Theorem \ref{thm:stab}. However, it is possible to adapt similar techniques also for the case of piecewise $d$-linear elements. Let us restrict ourselves to affinely mapped hexahedral elements, i.e., each element of ${\mathcal T}_h $ is affinely equivalent to
the reference element $\hat T := (-1,1)^d, d=3$. The following lemma guarantees a local equivalence of
semi-norms.
\begin{lemma} 
Let $\hat \bv_h \in [Q_1(\hat T)]^d$, $d=3$, then it holds
\begin{align} \label{eq:q1p0}
2 \| \dev \sym\nabla \hat \bv_h \|_{0;\hat T}^2
\leq \frac 53 \left( 2 \| \dev \sym\nabla \hat \bv_h \|_{0;\hat T}^2 -
  \frac  13 \| \div \hat \bv_h  - \Pi_0 \div \hat \bv_h  \|_{0;\hat T}^2   \right) .
\end{align}
\end{lemma}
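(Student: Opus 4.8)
The plan is to treat \eqref{eq:q1p0} as a finite-dimensional spectral inequality on the fixed reference cell $\hat T = (-1,1)^3$. Writing $\|\cdot\| := \|\cdot\|_{0;\hat T}$ and abbreviating $A := 2\|\dev\sym\nabla\hat\bv_h\|^2$ and $B := \|\div\hat\bv_h - \Pi_0\div\hat\bv_h\|^2$, a one-line rearrangement shows that \eqref{eq:q1p0} is equivalent to the single scalar inequality $B \le \tfrac65 A$. First I would record the two elementary identities $\|\dev\sym\nabla\hat\bv_h\|^2 = \|\sym\nabla\hat\bv_h\|^2 - \tfrac13\|\div\hat\bv_h\|^2$ (the trace--deviator splitting of a symmetric tensor, using $\div\hat\bv_h = \tr\sym\nabla\hat\bv_h$) and $B = \|\div\hat\bv_h\|^2 - \|\Pi_0\div\hat\bv_h\|^2$ (orthogonality of the $L^2$-projection $\Pi_0$). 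These turn both $A$ and $B$ into explicit quadratic forms in the nodal data.

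The decisive structural observation is that the tensor-product monomials $\{1,x,y,z,xy,xz,yz,xyz\}$ are mutually $L^2(\hat T)$-orthogonal. Consequently $\nabla\hat\bv_h$ decomposes orthogonally into a constant tensor (built from the purely linear degrees of freedom), a linear tensor (the bilinear degrees of freedom) and a bilinear tensor (the trilinear degrees of freedom), and both $A$ and $B$ block-diagonalize over these three groups with no cross terms. Hence it suffices to establish $B \le \tfrac65 A$ on each block separately; summing the three inequalities then yields the claim globally. All the required integrals are pure bookkeeping, e.g. $\int_{\hat T}x^2 = \tfrac83$ and $\int_{\hat T}(xy)^2 = \tfrac89$.

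On the constant-gradient block the divergence is already constant, so $\Pi_0\div\hat\bv_h = \div\hat\bv_h$, giving $B=0 \le \tfrac65 A$ trivially. On the trilinear block a short explicit computation gives $\|\sym\nabla\hat\bv_h\|^2 = \tfrac{16}{9}H$ and $\|\div\hat\bv_h\|^2 = \tfrac89 H$ for $H$ the sum of the three squared trilinear coefficients, whence the ratio $B/A = \tfrac{3}{10} < \tfrac65$. The remaining linear block is the heart of the matter and the step I expect to be the main obstacle: there one splits off (again orthogonally) the three pure-shear coefficients, which do not enter $\div\hat\bv_h$ and therefore contribute $B=0$, leaving the six coefficients that carry the divergence. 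On this final sub-block the quotient reduces to $B/A = 3\,\frac{M+2P}{7M-4P}$, where $M$ is the sum of the six squared coefficients and $P$ is a sum of three products, the three underlying pairs being disjoint. Applying $2\alpha\beta \le \alpha^2+\beta^2$ to each pair gives $P \le \tfrac12 M$, and since $\tfrac{d}{dP}\frac{M+2P}{7M-4P} = \frac{18M}{(7M-4P)^2} > 0$ the quotient is increasing in $P$, so its maximum is attained at $P=\tfrac12 M$ with value $3\cdot\tfrac25 = \tfrac65$.

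Combining the three blocks gives $B \le \tfrac65 A$ and hence \eqref{eq:q1p0}. The bound is sharp: the field $\hat\bv_h = (xy,\,0,\,yz)^\top$ saturates the linear block, producing $A = \tfrac{80}{9}$ and $B = \tfrac{32}{3}$, and turns \eqref{eq:q1p0} into an equality, which also serves as a check on the constant $\tfrac53$. The only genuinely non-mechanical part is the reduction of the linear block to three decoupled pairs; once the pure-shear degrees of freedom have been separated out, the orthogonality of the monomial basis makes every remaining $L^2$ expression a weighted sum of squares, so nothing beyond the elementary scalar optimization above is needed.
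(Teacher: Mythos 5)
Your proof is correct, and it is a genuinely different (and more informative) route than the paper's. The paper disposes of this lemma in a single sentence: the authors regard both sides as quadratic forms on the 24-dimensional space $[Q_1(\hat T)]^3$ and verify the constant $\frac53$ by solving a 24-dimensional generalized symmetric eigenvalue problem, i.e.\ a brute-force computation that exposes no structure. You solve the same generalized eigenproblem by hand: your rearrangement of \eqref{eq:q1p0} to $B \le \frac65 A$ with $A = 2\|\dev\sym\nabla\hat\bv_h\|_{0;\hat T}^2$ and $B = \|\div\hat\bv_h - \Pi_0\div\hat\bv_h\|_{0;\hat T}^2$ is exactly the Rayleigh-quotient formulation, the $L^2(\hat T)$-orthogonality of the monomials $1,x,y,z,xy,xz,yz,xyz$ block-diagonalizes both forms over the constant/linear/bilinear gradient groups, and after splitting off the three divergence-free shear coefficients ($e_3,f_2,g_1$, which decouple from the remaining six in $\|\sym\nabla\hat\bv_h\|^2$ by the same monomial orthogonality) only a six-dimensional block remains. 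I verified your numbers: on that block $\|\sym\nabla\hat\bv_h\|^2 = 4M$ and $B=\frac83(M+2P)$, giving $B/A = 3(M+2P)/(7M-4P)$; AM--GM on the three disjoint pairs gives $P\le\frac12 M$, the quotient is increasing in $P$, and the maximum is $\frac65$; on the trilinear block $\|\sym\nabla\hat\bv_h\|^2=\frac{16}{9}H$, $\|\div\hat\bv_h\|^2=\frac89 H$ indeed yield $B/A=\frac{3}{10}$; and the field $\hat\bv_h=(xy,0,yz)^\top$ gives $A=\frac{80}{9}$, $B=\frac{32}{3}$ and equality in \eqref{eq:q1p0}. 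What your approach buys over the paper's: an analytic, human-checkable derivation, the additional knowledge that $\frac53$ is the \emph{optimal} constant, and an explicit characterization of the extremizers (paired coefficients equal, e.g.\ $e_1=g_3$, all other blocks zero). What the paper's approach buys is brevity and mechanical generality: the eigenvalue computation transfers unchanged to other reference elements or polynomial spaces where no convenient orthogonal monomial basis block-diagonalizes the forms.
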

\begin{proof}
 A straightforward
 computation gives the required upper bound by solving a 24-dimensional generalized symmetric eigenvalue problem.
\end{proof} 

Using the affine equivalence, the upper bound \eqref{eq:q1p0}, and
replacing $\div \bu_h $ by  $\Pi_0 \div \bu_h $, in the definition of
$q_h $ and $ \bs{z}$ in the proof of Theorem \ref{thm:stab}, we can
easily adapt it
for the $Q_1-P_0$ case.

\section{Numerical examples}
\label{sec:numerics}
%!TEX root = ./2016_Viscosityjumps.tex
%
Let us demonstrate the theoretical considerations by some illustrative examples.
We study different test scenarios including traction boundaries and large viscosity jumps as well as several discretizations. 
In large scale computations, the use of stabilized equal-order $P_1$ discretizations (e.g. Brezzi and Pitkäranta, \cite{brezzi1984stabilization}) are quite popular. Although they are formally not covered by our theory, we also study these discretizations below and show by numerical examples that no stability problems occur. Furthermore, we use a higher order discretization, namely a fourth order Scott-Vogelius element, which guarantees strong mass conservation and thus is automatically stable within our new formulation.
Most of the implementations used in the following sections are based on the FEniCS (v\,1.5.0) finite element framework \cite{LoggWellsEtAl2012a}. However, the results for the stabilized equal-order $P_1$ discretization are obtained by a memory-efficient matrix-free implementation in the hierarchical hybrid grids (HHG) framework \cite{BBergen_FHuelsemann_2004,bergen-gradl-ruede-huelsemann_2006,CPE:CPE2968,gmeiner-ruede-stengel-waluga-wohlmuth_2015,gmeiner-huber-john-ruede-wohlmuth_2015}.

\subsection{Traction-type boundary conditions}
In this simple example, we shall demonstrate the effect of the modified bilinear form for applying traction boundary conditions which are not natural to incorporate in a standard gradient-based form of the Stokes problem. For simplicity, we consider $\mu=1$ everywhere inside a rectangular domain $\Omega=[0,5]\times[-1,1]$, and we define a partition of the boundary into an outflow part $\Gamma_{\rm out}={5}\times[-1,1]$, an inflow part $\Gamma_{\rm in}={5}\times(-1,1)$ and a no-slip boundary $\Gamma_{\rm ns}=\partial\Omega\backslash\overline{\Gamma_{\rm in}\cup\Gamma_{\rm out}}$. Given $\bu_{\rm in}(x,y)=(1-y^2,0)^\top$, the strong problem consists of finding $(\mathbf{u},p)$ such that
\begin{subequations}
\label{eq:stokes-strong-traction-bc}
\begin{align}
-\Delta\bu + \nabla p &= \bs{0}, &&\text{in}~\Omega,\\
\div\bu &= 0, &&\text{in}~\Omega,\\
\bu &= \bu_{\rm in}, &&\text{on}~\Gamma_{\rm in},\\
\bu &= \mathbf{0}, &&\text{on}~\Gamma_{\rm ns},\\
\widehat{\bs{\sigma}}(\bu,p)\cdot\bn & = 0, && \text{on}~\Gamma_{\rm out}.
\end{align}
\end{subequations}

\noindent For comparison, we consider $\widehat{\bs{\sigma}}(\bu,p) =
\nabla\bu - p I$ for the natural homogeneous Neumann boundary
conditions for the gradient-based variant of Stokes' problem (we use
$a(\cdot,\cdot)=(\nabla\cdot,\nabla\cdot)$ in the weak form), and we
choose $\widehat{\bs{\sigma}}(\bu,p) = \bs{\sigma}(\bu,p)$ for the
case of traction-free boundary conditions (we use
$a(\cdot,\cdot) = a_{\tr}(\cdot,\cdot)$ in the weak form). As a
reference solution, we also consider the same problem, using the
standard strain-based weak formulation which is compatible with traction-free
boundaries (we use $a(\cdot,\cdot) = a_{\rm sym}(\cdot,\cdot)$
in the weak form). For all cases, we discretize our problem using the fourth order
Scott--Vogelius element ($P_4-P_3^{\rm disc}$). This element is uniformly inf-sup stable for $d=2$ and yields strongly divergence-free velocity solutions \cite{scott1985norm}. The mesh is a
uniform mesh with $50\times20\times2=2\,000$ triangular elements.

Let us firstly remark that the stiffness-matrix for the gradient-based formulation and the stencil-modification only differs at the degrees of freedom associated with the outflow boundary. Hence, in a practical implementation the cross-derivatives only have to be considered in this region for consistency. Secondly, we mention that in contrast to the approach discussed in \cite{LimIdeRosOna:2007}, we obtain a symmetric problem with provable stability and convergence. 
We finally report that for the above example the number of non-zeros in the viscous block only increases by around $0.75 \%$ compared to the saddle-point problem with the gradient-based operator, while the corresponding block in the strain-based Stokes operator requires roughly $89 \%$ more non-zeros. This reduction  in the number of nonzero 
stencil entries leads to both,
lower memory requirements and significant 
savings in terms of floating point operations
that are required for the operator application. 
The savings are even greater 
in a 
three-dimensional 
setting.
%nnz laplace 1590382 (full system); 743362 (A-block only)
%nnz stencil 1591150 (full system); 748962 (A-block only)
%nnz epsilon 2224248 (full system); 1401600 (A-block only)

\begin{figure}[htbp]
\centerline{\includegraphics[width=.13\textwidth]{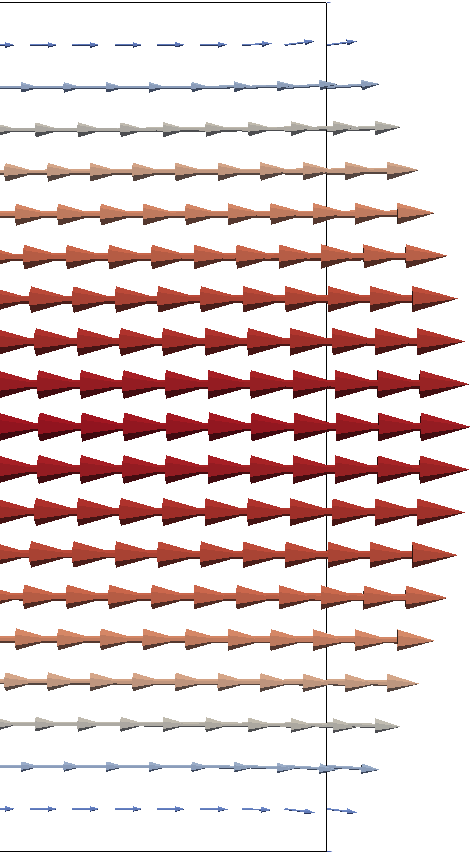}\qquad
\includegraphics[width=.13\textwidth]{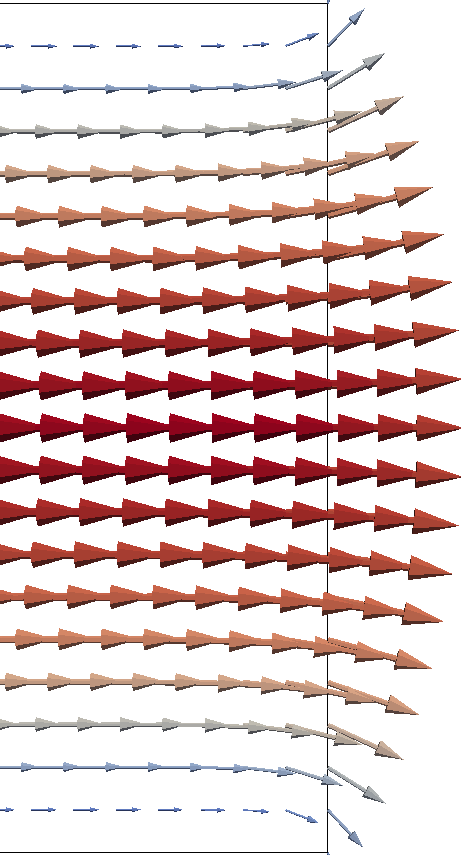}\qquad
\includegraphics[width=.13\textwidth]{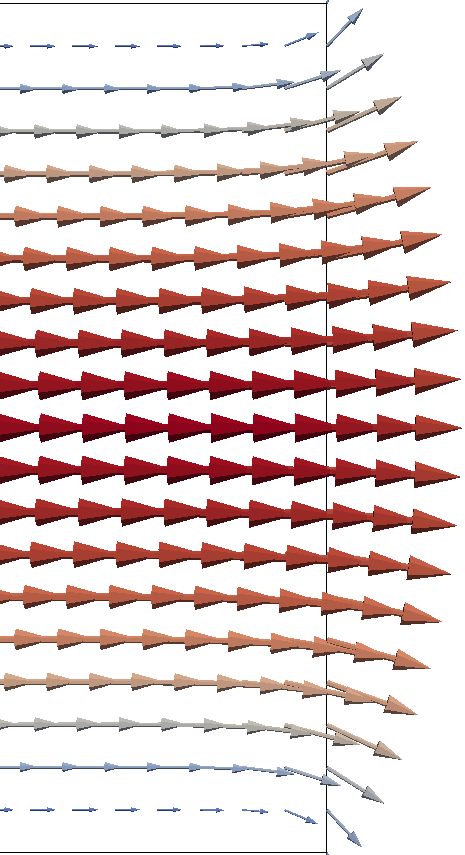}}
\caption{\label{fig:taylor-hood-traction}
Outflow boundary: Plot of the velocity vectors in the outflow region for the gradient-based formulation (left), the stencil-modification (center) and the strain-based formulation (right).}
\end{figure}

In Fig.~\ref{fig:taylor-hood-traction} we depict the outflow part of the domain for all three cases. We clearly observe that the boundary modification allows us to impose traction-free boundary conditions in a consistent manner also for a gradient-based formulation of the Stokes problem. Thus, the choice of the bilinear form does not dictate which boundary conditions shall be used in the whole outflow region. In particular, it is also possible to mix these two kinds of outflow conditions in a single code, giving more flexibility in modeling the flow-conditions that can be assumed past the outflow boundaries.

\subsection{Interface problems}

It is not straightforward to derive exact solutions for the three-dimensional interface problem and, in general,   no closed-form solutions exist. Thus, we consider here a very simple benchmark problem extended from the two-dimensional case and  more complex problems with numerically obtained reference solutions.
 If not mentioned otherwise, we shall below use the stabilized $P_1-P_0$ discretization of Section~\ref{sec:stab-p1p0} together with the stabilization form \eqref{eq:stabc} and $\gamma=1$.

\subsubsection{Couette flow}
For the analytical benchmark, we consider the exact solution of Couette flow given in \cite{HanLarZah:2012:preprint}: The domain is chosen as $\Omega = (0,1) \times (-\tfrac12,\tfrac12) \times (0,1)$ and the interface $\Gamma$ is placed at $y = 0$ dividing $\Omega$ into $\Omega_1=(0,1)\times(-\tfrac12,0) \times (0,1)$ and $\Omega_2=(0,1)\times(0,\tfrac12) \times (0,1)$. We then set $\bs{f} = (3\mu,0,0)^\top$ and boundary conditions such that the exact solution is given by
\begin{align*}
\bu = ( \tfrac12(1-x^2), xy, 0 )^\top, \qquad p(x,y) = 2x\mu - \tfrac12(\mu_1 + \mu_2),
\end{align*}
where $\mu_1=1$ and $\mu_2 = \in \{ 10^{-3}\}$. Our initial mesh is an interface-resolving uniform tetrahedral mesh with mesh-size $h=\tfrac14$, and we study the convergence rates under uniform refinement, see Table~\ref{tab:couette-results-1e3}. As our numerical studies indicate, the expected optimal convergence rates are obtained.

\begin{table}[htbp]
\centering\small
\begin{tabular}{c||cc|cc|cc}
\hline
level & $\|\bu-\bu_h\|_{0}$ & rate & $\|\bu-\bu_h\|_{\mathbf{V}}$ & rate & $\|p-p_h\|_Q$ & rate \\
\hline
%python couette.py --maxlevel=4 --tablefmt='latex' --output_every --viscosity_contrast=1e3 --dim=3
%  3.4937e-02 & 0.0000e+00 & 2.2818e-01 & 0.0000e+00 & 1.9487e+00 & 0.0000e+00 \\
%  9.5263e-03 & 1.8748e+00 & 1.1065e-01 & 1.0442e+00 & 6.4989e-01 & 1.5842e+00 \\
%  2.4913e-03 & 1.9350e+00 & 5.3663e-02 & 1.0440e+00 & 1.3437e-01 & 2.2740e+00 \\
%  6.4259e-04 & 1.9549e+00 & 2.6567e-02 & 1.0143e+00 & 3.9015e-02 & 1.7841e+00 \\
%  1.6373e-04 & 1.9725e+00 & 1.3256e-02 & 1.0030e+00 & 1.5737e-02 & 1.3099e+00 \\
%  4.1456e-05 & 1.9817e+00 & 6.6258e-03 & 1.0005e+00 & 7.3621e-03 & 1.0959e+00 \\
%iterations: [15, 24, 30, 55, 67]

0 & 9.5263e-03 &  --  & 1.1065e-01 &  --  & 6.4989e-01 &  --  \\
1 & 2.4913e-03 & 1.93 & 5.3663e-02 & 1.04 & 1.3437e-01 & 2.27 \\
2 & 6.4259e-04 & 1.95 & 2.6567e-02 & 1.01 & 3.9015e-02 & 1.78 \\
3 & 1.6373e-04 & 1.97 & 1.3256e-02 & 1.00 & 1.5737e-02 & 1.30 \\
4 & 4.1456e-05 & 1.98 & 6.6258e-03 & 1.00 & 7.3621e-03 & 1.09 \\
\hline
\end{tabular}
\caption{\label{tab:couette-results-1e3}
Results for the non-isoviscous Couette flow example: $\mu_1/\mu_2=10^3$
}
\end{table}

\subsubsection{Viscosity layers}\label{ssub:visc_layers}

In this test, we consider a  more complex problem for which 
we have no analytical solution available. 
We again use the unit cube $\Omega := (0,1)^3$ and define four subdomains $\Omega_i := (0,1)^2\times (i/4, (i+1)/4)$, $i=0,\dots 3$, where we set the viscosities in each of these subdomains to $\mu_i = 10^i$, i.e., each additional layer increases the viscosity by an order of magnitude. For the boundary condition, we apply free-slip conditions, and we choose the forcing as
$\mathbf{f}=(0,0,-\cos(2\pi x)\cos(2\pi y)\sin(\pi z))^\top$. We choose an initial discretization of the domain into $4^3$ cubes which are then each subdivided into six tetrahedra to form the triangulation at the lowest level.

We compare the discrete solutions $(\bs U_h, P_h)$ obtained for the strain-based Stokes problem with cross-derivatives (i.e., using $a_{\rm sym} (\cdot,\cdot)$ in the weak formulation) to the discrete solutions $(\bu_h, p_h)$ obtained using the modified form without global cross-couplings using $a_{\rm tr} (\cdot,\cdot)$ in the weak formulation. Moreover, to demonstrate the effect of the modified bilinear form, we also compare the strain-based formulation with the gradient-based version that is obtained by using $a(\cdot,\cdot)=(\mu\nabla\cdot,\nabla\cdot)$ in the weak formulation. It should be noted that we are comparing numerical solutions here. However, since the strain-based Stokes discretization is a widely-accepted model for non-isoviscous incompressible flow, we are confident that the reference solution suffices to demonstrate that our method converges optimally in more general situations.

\begin{table}[ht]
\centering\small
\begin{tabular}{c||cc|cc|cc}
\hline
level & $\frac{\|\bs U_h-\bu_h\|_0}{\|\bs U_h\|_0}$ & rate & $\frac{\|\bs U_h-\bu_h\|_{\bs V}}{\|\bs U_h\|_{\bs V}}$ & rate & $\frac{\|P_h-p_h\|_Q}{\|P_h\|_Q}$ & rate \\
\hline
0 & 2.9339e-01 & -- & 3.0574e-01 & -- & 1.7826e-01 & -- \\
1 & 3.0164e-01 & -- & 4.1250e-01 & -- & 2.2806e-01 & -- \\
2 & 3.1359e-01 & -- & 4.8968e-01 & -- & 2.5337e-01 & -- \\
3 & 3.1095e-01 & -- & 5.1625e-01 & -- & 2.6207e-01 & -- \\
4 & 3.0922e-01 & -- & 5.2354e-01 & -- & 2.6458e-01 & -- \\
\hline
\end{tabular}
\caption{\label{tab:layers-laplace}
Layer example using stabilized $P_1-P_0$ elements: relative errors obtained using the gradient-based vs. the strain-based Stokes operator on a series of uniformly refined meshes (negative or small rates are not displayed).}
\end{table}

To obtain a better understanding about the magnitude of the errors involved, we divide the discrete error norms by the respective norm of the reference solution. The results listed in Tables \ref{tab:layers-laplace} and \ref{tab:layers-stencil} show that for this seemingly simple setup, the gradient-based form produces results which are up to $50\%$ off in the energy norm and up to $30\%$ off in the $L^2$-norm of the velocity.

\begin{table}[ht]
\centering\small
\begin{tabular}{c||cc|cc|cc}
\hline
level & $\frac{\|\bs U_h-\bu_h\|_0}{\|\bs U_h\|_0}$ & rate & $\frac{\|\bs U_h-\bu_h\|_{\bs V}}{\|\bs U_h\|_{\bs V}}$ & rate & $\frac{\|P_h-p_h\|_Q}{\|P_h\|_Q}$ & rate \\
\hline
0 & 1.7799e-01 &  --  & 1.6976e-01 &  --  & 1.8562e-01 &  -- \\
1 & 9.9843e-02 & 0.83 & 1.0584e-01 & 0.68 & 1.3775e-01 & 0.43 \\
2 & 3.6872e-02 & 1.43 & 5.2343e-02 & 1.01 & 5.9058e-02 & 1.22 \\
3 & 1.0722e-02 & 1.78 & 2.2601e-02 & 1.21 & 2.1634e-02 & 1.44 \\
4 & 2.8306e-03 & 1.92 & 1.0224e-02 & 1.14 & 8.3074e-03 & 1.38 \\
\hline
\end{tabular}
\caption{\label{tab:layers-stencil}Layer example using stabilized $P_1-P_0$ elements: relative errors obtained using the stencil-modified vs. the strain-based Stokes operator.}
\end{table}

The consistently modified form however leads to an equally convergent scheme as the strain-based reference implementation with cross-derivatives. For comparison, we depict the pressure fields obtained using all approaches in Figure~\ref{fig:layer-pressure}.

\begin{figure}[htbp]\centering
\centerline{\includegraphics[width=.75\textwidth]{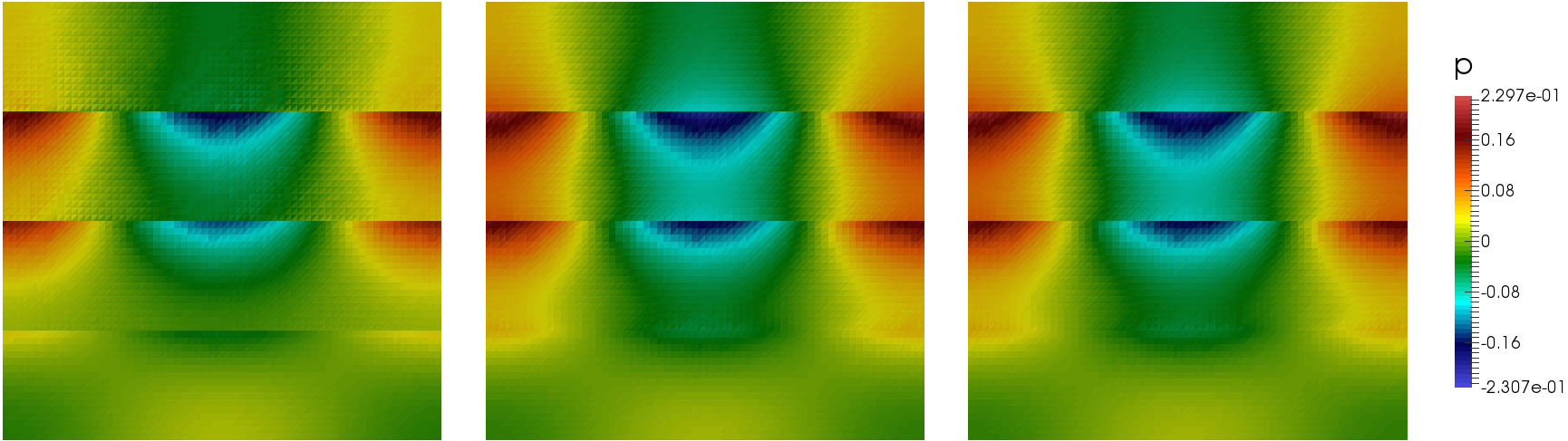}}
\caption{\label{fig:layer-pressure}
Viscosity layers: Plot of the pressure field ($y=0.5$) on the finest mesh level for the gradient-based formulation (left), the stencil modification (center) and the reference solution obtained by the strain-based Stokes formulation (right).}
\end{figure}

\subsubsection{Viscosity columns}\label{ssub:visc_column}

Next, we consider the same domain, mesh-sequence, boundary conditions and forcing terms as in the previous example of Section~\ref{ssub:visc_layers}, but we choose a different viscosity field. We define the columns $\Omega_2 = (0,\tfrac14)^2\times(0,1)\cup(\tfrac34,1)^2\times(0,1)$ and denote the remaining domain by $\Omega_1 := \Omega\backslash\overline{\Omega_2}$. Furthermore, we set $\mu_1=1$ in $\Omega_1$ and $\mu_2=10$ in $\Omega_2$. We note that for this setup, we expect a singularity in the pressure at the edge where the normal is discontinuous, see Figure~\ref{fig:1dplot} for some illustration.

\begin{figure}[ht]
\centering
\pgfplotsset{every tick label/.append style={font=\small}}
\begin{tikzpicture}[scale=0.7]
		\begin{axis}[
		width=0.5\textwidth,
		height=0.5\textwidth,
		xlabel={$\gamma(s)$},
		ylabel={$p$},
		legend style = {at = {(0.315,0.855)},anchor=west,font=\small},
		xmin = 0, xmax=1.41
				]

% pressure level 2
 \addplot[densely dashdotted,line width=0.25mm,color=blue] coordinates {
(	0	,	0.011977	)
(	0.014142	,	0.010909	)
(	0.028284	,	0.0098412	)
(	0.042426	,	0.0087229	)
(	0.056569	,	0.0075539	)
(	0.070711	,	0.0063848	)
(	0.084853	,	0.0052158	)
(	0.098995	,	0.0048847	)
(	0.11314	,	0.0048328	)
(	0.12728	,	0.0046779	)
(	0.14142	,	0.0042138	)
(	0.15556	,	0.0037497	)
(	0.16971	,	0.0032856	)
(	0.18385	,	0.0029874	)
(	0.19799	,	0.0028551	)
(	0.21213	,	0.0027228	)
(	0.22627	,	0.0025916	)
(	0.24042	,	0.0024604	)
(	0.25456	,	0.0023292	)
(	0.2687	,	0.0023025	)
(	0.28284	,	0.0025894	)
(	0.29698	,	0.0028762	)
(	0.31113	,	0.0033882	)
(	0.32527	,	0.0039752	)
(	0.33941	,	0.0045623	)
(	0.35355	,	0.0051493	)
(	0.3677	,	0.0048098	)
(	0.38184	,	0.0044702	)
(	0.39598	,	0.0039554	)
(	0.41012	,	0.0032654	)
(	0.42426	,	0.0025754	)
(	0.43841	,	0.0018853	)
(	0.45255	,	0.0020901	)
(	0.46669	,	0.0025932	)
(	0.48083	,	0.0031827	)
(	0.49497	,	0.004032	)
(	0.50912	,	0.0048812	)
(	0.52326	,	0.0057305	)
(	0.5374	,	0.0063031	)
(	0.55154	,	0.0065992	)
(	0.56569	,	0.0068953	)
(	0.57983	,	0.0075389	)
(	0.59397	,	0.0081824	)
(	0.60811	,	0.008826	)
(	0.62225	,	0.0093444	)
(	0.6364	,	0.0094876	)
(	0.65054	,	0.0096307	)
(	0.66468	,	0.0098975	)
(	0.67882	,	0.010206	)
(	0.69296	,	0.010514	)
(	0.70711	,	0.010822	)
(	0.72125	,	0.010625	)
(	0.73539	,	0.010429	)
(	0.74953	,	0.010153	)
(	0.76368	,	0.0097969	)
(	0.77782	,	0.0094413	)
(	0.79196	,	0.0090856	)
(	0.8061	,	0.0086817	)
(	0.82024	,	0.0082617	)
(	0.83439	,	0.0077569	)
(	0.84853	,	0.006998	)
(	0.86267	,	0.0062391	)
(	0.87681	,	0.0054802	)
(	0.89095	,	0.0049838	)
(	0.9051	,	0.00475	)
(	0.91924	,	0.0045162	)
(	0.93338	,	0.0039211	)
(	0.94752	,	0.0033261	)
(	0.96167	,	0.002731	)
(	0.97581	,	0.0023828	)
(	0.98995	,	0.0027751	)
(	1.0041	,	0.0031675	)
(	1.0182	,	0.0038281	)
(	1.0324	,	0.0045783	)
(	1.0465	,	0.0053284	)
(	1.0607	,	0.0060785	)
(	1.0748	,	0.0056784	)
(	1.0889	,	0.0052782	)
(	1.1031	,	0.0047105	)
(	1.1172	,	0.0039752	)
(	1.1314	,	0.0032398	)
(	1.1455	,	0.0025045	)
(	1.1597	,	0.0023037	)
(	1.1738	,	0.0022811	)
(	1.1879	,	0.0022719	)
(	1.2021	,	0.002303	)
(	1.2162	,	0.0023341	)
(	1.2304	,	0.0023652	)
(	1.2445	,	0.0027195	)
(	1.2587	,	0.0033969	)
(	1.2728	,	0.0040743	)
(	1.2869	,	0.0050476	)
(	1.3011	,	0.0060208	)
(	1.3152	,	0.0069941	)
(	1.3294	,	0.0075757	)
(	1.3435	,	0.0069826	)
(	1.3576	,	0.0063894	)
(	1.3718	,	0.005987	)
(	1.3859	,	0.005648	)
(	1.4001	,	0.0053091	)
(	1.4142	,	0.0049702	)
};

%pressure level 4
\addplot[color=red,line width=0.3mm, densely dashed] coordinates {
(	0	,	0.0090734	)
(	0.014142	,	0.0079706	)
(	0.028284	,	0.0074255	)
(	0.042426	,	0.0075884	)
(	0.056569	,	0.007208	)
(	0.070711	,	0.006754	)
(	0.084853	,	0.0063766	)
(	0.098995	,	0.005895	)
(	0.11314	,	0.0053416	)
(	0.12728	,	0.0048037	)
(	0.14142	,	0.0042429	)
(	0.15556	,	0.0036553	)
(	0.16971	,	0.0031458	)
(	0.18385	,	0.0026442	)
(	0.19799	,	0.0021829	)
(	0.21213	,	0.0018749	)
(	0.22627	,	0.001621	)
(	0.24042	,	0.0015125	)
(	0.25456	,	0.0015852	)
(	0.2687	,	0.0018217	)
(	0.28284	,	0.0024183	)
(	0.29698	,	0.0031192	)
(	0.31113	,	0.0041112	)
(	0.32527	,	0.006582	)
(	0.33941	,	0.0091703	)
(	0.35355	,	0.011876	)
(	0.3677	,	0.0071737	)
(	0.38184	,	0.0039302	)
(	0.39598	,	0.0034091	)
(	0.41012	,	0.0028538	)
(	0.42426	,	0.0024685	)
(	0.43841	,	0.0026151	)
(	0.45255	,	0.0028272	)
(	0.46669	,	0.0031186	)
(	0.48083	,	0.0035613	)
(	0.49497	,	0.0040604	)
(	0.50912	,	0.0046178	)
(	0.52326	,	0.0051951	)
(	0.5374	,	0.0058157	)
(	0.55154	,	0.0064531	)
(	0.56569	,	0.0070355	)
(	0.57983	,	0.007655	)
(	0.59397	,	0.0082309	)
(	0.60811	,	0.0087345	)
(	0.62225	,	0.0092373	)
(	0.6364	,	0.0096494	)
(	0.65054	,	0.0099925	)
(	0.66468	,	0.0103	)
(	0.67882	,	0.010485	)
(	0.69296	,	0.010609	)
(	0.70711	,	0.010674	)
(	0.72125	,	0.010609	)
(	0.73539	,	0.010487	)
(	0.74953	,	0.010288	)
(	0.76368	,	0.010002	)
(	0.77782	,	0.0096454	)
(	0.79196	,	0.0092256	)
(	0.8061	,	0.008752	)
(	0.82024	,	0.0082142	)
(	0.83439	,	0.007653	)
(	0.84853	,	0.0070583	)
(	0.86267	,	0.0064242	)
(	0.87681	,	0.0058283	)
(	0.89095	,	0.0052089	)
(	0.9051	,	0.004597	)
(	0.91924	,	0.0040848	)
(	0.93338	,	0.0035675	)
(	0.94752	,	0.0031316	)
(	0.96167	,	0.0028404	)
(	0.97581	,	0.0026144	)
(	0.98995	,	0.0025482	)
(	1.0041	,	0.0028257	)
(	1.0182	,	0.0033552	)
(	1.0324	,	0.0041341	)
(	1.0465	,	0.0071079	)
(	1.0607	,	0.012194	)
(	1.0748	,	0.0093546	)
(	1.0889	,	0.0065224	)
(	1.1031	,	0.0040139	)
(	1.1172	,	0.003171	)
(	1.1314	,	0.0025052	)
(	1.1455	,	0.0018195	)
(	1.1597	,	0.0015983	)
(	1.1738	,	0.0015663	)
(	1.1879	,	0.0016457	)
(	1.2021	,	0.0018759	)
(	1.2162	,	0.0022269	)
(	1.2304	,	0.0026507	)
(	1.2445	,	0.0031437	)
(	1.2587	,	0.0036922	)
(	1.2728	,	0.0042223	)
(	1.2869	,	0.004803	)
(	1.3011	,	0.0053688	)
(	1.3152	,	0.0058566	)
(	1.3294	,	0.0063622	)
(	1.3435	,	0.0068347	)
(	1.3576	,	0.0071198	)
(	1.3718	,	0.0073766	)
(	1.3859	,	0.0079095	)
(	1.4001	,	0.0076874	)
(	1.4142	,	0.0068882	)
};

%pressure level 6
\addplot[color=green!50!black,line width=0.2mm,solid] coordinates {
(	0	,	0.0080948	)
(	0.014142	,	0.0078202	)
(	0.028284	,	0.0076788	)
(	0.042426	,	0.0074539	)
(	0.056569	,	0.0071509	)
(	0.070711	,	0.0068059	)
(	0.084853	,	0.0063605	)
(	0.098995	,	0.0058654	)
(	0.11314	,	0.0053386	)
(	0.12728	,	0.0047786	)
(	0.14142	,	0.0042078	)
(	0.15556	,	0.0036387	)
(	0.16971	,	0.0030956	)
(	0.18385	,	0.0025937	)
(	0.19799	,	0.0021533	)
(	0.21213	,	0.001804	)
(	0.22627	,	0.0015485	)
(	0.24042	,	0.00145	)
(	0.25456	,	0.0015148	)
(	0.2687	,	0.001754	)
(	0.28284	,	0.0022465	)
(	0.29698	,	0.0030322	)
(	0.31113	,	0.0042205	)
(	0.32527	,	0.0060364	)
(	0.33941	,	0.0091375	)
(	0.35355	,	0.018411	)
(	0.3677	,	0.0066945	)
(	0.38184	,	0.0042305	)
(	0.39598	,	0.0032094	)
(	0.41012	,	0.002714	)
(	0.42426	,	0.0025346	)
(	0.43841	,	0.0025846	)
(	0.45255	,	0.0027916	)
(	0.46669	,	0.0031242	)
(	0.48083	,	0.0035556	)
(	0.49497	,	0.0040519	)
(	0.50912	,	0.0046059	)
(	0.52326	,	0.0051981	)
(	0.5374	,	0.0058115	)
(	0.55154	,	0.0064371	)
(	0.56569	,	0.0070525	)
(	0.57983	,	0.0076517	)
(	0.59397	,	0.0082215	)
(	0.60811	,	0.0087545	)
(	0.62225	,	0.0092298	)
(	0.6364	,	0.0096519	)
(	0.65054	,	0.010008	)
(	0.66468	,	0.010289	)
(	0.67882	,	0.010498	)
(	0.69296	,	0.01062	)
(	0.70711	,	0.010661	)
(	0.72125	,	0.010622	)
(	0.73539	,	0.010504	)
(	0.74953	,	0.010286	)
(	0.76368	,	0.010002	)
(	0.77782	,	0.009656	)
(	0.79196	,	0.009234	)
(	0.8061	,	0.00875	)
(	0.82024	,	0.0082233	)
(	0.83439	,	0.0076537	)
(	0.84853	,	0.0070538	)
(	0.86267	,	0.0064328	)
(	0.87681	,	0.0058152	)
(	0.89095	,	0.0051983	)
(	0.9051	,	0.0046086	)
(	0.91924	,	0.0040535	)
(	0.93338	,	0.0035501	)
(	0.94752	,	0.0031265	)
(	0.96167	,	0.0027963	)
(	0.97581	,	0.0025869	)
(	0.98995	,	0.0025388	)
(	1.0041	,	0.0027116	)
(	1.0182	,	0.0032077	)
(	1.0324	,	0.0042468	)
(	1.0465	,	0.006684	)
(	1.0607	,	0.018495	)
(	1.0748	,	0.0091489	)
(	1.0889	,	0.0060443	)
(	1.1031	,	0.0042246	)
(	1.1172	,	0.0030371	)
(	1.1314	,	0.0022438	)
(	1.1455	,	0.0017538	)
(	1.1597	,	0.001499	)
(	1.1738	,	0.0014445	)
(	1.1879	,	0.001551	)
(	1.2021	,	0.0017971	)
(	1.2162	,	0.0021428	)
(	1.2304	,	0.0025878	)
(	1.2445	,	0.0030864	)
(	1.2587	,	0.0036289	)
(	1.2728	,	0.0041947	)
(	1.2869	,	0.0047534	)
(	1.3011	,	0.0053212	)
(	1.3152	,	0.0058627	)
(	1.3294	,	0.0063359	)
(	1.3435	,	0.0067612	)
(	1.3576	,	0.0071566	)
(	1.3718	,	0.0074495	)
(	1.3859	,	0.0076928	)
(	1.4001	,	0.0077955	)
(	1.4142	,	0.0074643	)
};
\legend{$L=2$, $L=4$, $L=6$}
	\end{axis}
	\end{tikzpicture}
\input{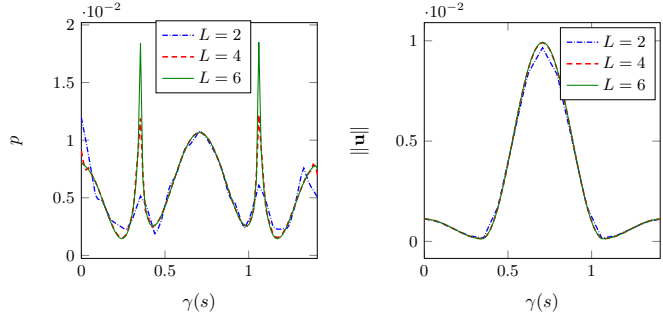}
\vskip-1em
\caption{\label{fig:1dplot} Viscosity columns: Mesh study of the discrete pressure (left), magnitude of the velocity (right) on a 1D line $\gamma(s)$ cutting diagonally through the domain.} 
\end{figure}

In this example, we first show the errors between the consistent
stencil formulation and the classical strain based formulation on the
same mesh for the theoretically covered $P_1-P_0$ case and for the
popular $P_1-P_1$ discretization which is not covered by our
theory. Note that we use a sparse-matrix implementation in the $P_1-P_0$
case, whereas for the $P_1-P_1$ discretization a memory-efficient
matrix-free implementation is used, for which we compute finer mesh levels, i.e., up to level 6 in Tables~\ref{tab:columns-laplace-p1p1}, \ref{tab:columns-stencil-p1p1} and \ref{tab:columns-stencil-finer-mesh-p1p1}.

\begin{table}[htbp]
\centering\small
\begin{tabular}{c||cc|cc|cc}
\hline
level & $\frac{\|\bs U_h-\bu_h\|_0}{\|\bs U_h\|_0}$ & rate & $\frac{\|\bs U_h-\bu_h\|_{\bs V}}{\|\bs U_h\|_{\bs V}}$ & rate & $\frac{\|P_h-p_h\|_Q}{\|P_h\|_Q}$ & rate \\
\hline
0 & 6.6276e-02 &  --  & 7.1282e-02 &  --  & 9.2322e-02 &  --  \\
1 & 3.2402e-02 & 1.03 & 5.1998e-02 & 0.45 & 7.5894e-02 & 0.28 \\
2 & 2.1200e-02 & 0.61 & 4.3765e-02 & 0.24 & 6.6193e-02 & 0.19 \\
3 & 1.9921e-02 & 0.08 & 4.2069e-02 & 0.05 & 6.5935e-02 & 0.00 \\
4 & 1.9985e-02 & 0.00 & 4.1805e-02 & 0.00 & 6.6048e-02 & 0.00 \\
\hline
\end{tabular}
\caption{\label{tab:columns-laplace}
Column example using stabilized $P_1-P_0$ elements: relative errors obtained using the gradient-based vs. the strain-based Stokes operator.}
\end{table}

 Tables \ref{tab:columns-laplace} and \ref{tab:columns-laplace-p1p1} show the results for the inconsistent gradient based formulation.  We again observe that, independently of the selected discretization, the gradient-based form of the Stokes problem converges to a different solution than the strain-based approach. In this example, the lack of consistency introduces a relative $L^2$ error of roughly $2\%$ which for both discretizations under consideration does not vanish under mesh refinement. 

\begin{table}[ht]
\centering\small
\begin{tabular}{c||cc|cc|cc}
\hline
level & $\frac{\|\bs U_h-\bu_h\|_0}{\|\bs U_h\|_0}$ & rate & $\frac{\|\bs U_h-\bu_h\|_{\bs V}}{\|\bs U_h\|_{\bs V}}$ & rate & $\frac{\|P_h-p_h\|_Q}{\|P_h\|_Q}$ & rate \\
\hline
2 & 7.4550e-02	&	--	&	9.2581e-02	&	--	&	2.7609e01	&	--  \\
3 & 3.0104e-02	&	1.36	&	6.0398e-02	&	0.64	&	1.8003e-01	&	0.64\\
4 & 2.1227e-02	&	0.52	&	4.7364e-02	&	0.36	&	1.1179e-01	&	0.70\\
5 & 2.0227e-02	&	0.07	&	4.3246e-02	&	0.13	&	8.0546e-02	&	0.48\\
6 & 2.0098e-02	&	0.01	&	4.2121e-02	&	0.04	&	6.9997e-02	&	0.20\\
%7 & 2.0065e-02	&	0.00	&	4.1832e-02	&	0.01	&	6.7070e-02	&	0.06\\
%8 & 2.00719e-02	&	0.00	&	4.22867e-02	&	0.07	&	6.65864e-02	&	0.01\\
\hline
\end{tabular}
\caption{\label{tab:columns-laplace-p1p1}
Column example using stabilized $P_1-P_1$ elements: relative errors obtained using the gradient-based vs. the strain-based Stokes operator.}
\end{table}

 The results for the consistently modified scheme are listed in Tables~\ref{tab:columns-stencil} and \ref{tab:columns-stencil-p1p1}. As is theoretically expected, the  stencil based formulation eliminates the consistency error in both cases and achieves good rates of convergence in the respective norms.

\begin{table}[htbp]
\centering\small
\begin{tabular}{c||cc|cc|cc}
\hline
level & $\frac{\|\bs U_h-\bu_h\|_0}{\|\bs U_h\|_0}$ & rate & $\frac{\|\bs U_h-\bu_h\|_{\bs V}}{\|\bs U_h\|_{\bs V}}$ & rate & $\frac{\|P_h-p_h\|_Q}{\|P_h\|_Q}$ & rate \\
\hline
0 & 1.1023e-01 &  --  & 1.1881e-01 &  --  & 1.4978e-01 &  --  \\
1 & 5.0381e-02 & 1.12 & 7.1554e-02 & 0.73 & 1.0439e-01 & 0.52 \\
2 & 1.7661e-02 & 1.51 & 3.4368e-02 & 1.05 & 5.0572e-02 & 1.04 \\
3 & 4.9194e-03 & 1.84 & 1.5852e-02 & 1.11 & 2.5225e-02 & 1.00 \\
4 & 1.2678e-03 & 1.95 & 7.5513e-03 & 1.06 & 1.2397e-02 & 1.02 \\
\hline
\end{tabular}
\caption{\label{tab:columns-stencil}
Column example using stabilized $P_1-P_0$ elements: relative errors obtained using the stencil-modified vs. the strain-based Stokes operator.}
\end{table}

\begin{table}[ht]
\centering\small
\begin{tabular}{c||cc|cc|cc}
\hline
level & $\frac{\|\bs U_h-\bu_h\|_0}{\|\bs U_h\|_0}$ & rate & $\frac{\|\bs U_h-\bu_h\|_{\bs V}}{\|\bs U_h\|_{\bs V}}$ & rate & $\frac{\|P_h-p_h\|_Q}{\|P_h\|_Q}$ & rate \\
\hline
2 & 1.3890e-02	&	--	&	2.4189e-02	&	--	&	7.7949e-02	&	--\\
3 & 4.2414e-03	&	1.79	&	1.2312e-02	&	1.02	&	4.8195e-02	&	0.72\\
4 & 1.1370e-03	&	1.94	&	6.2303e-03	&	1.00	&	2.5287e-02	&	0.95\\
5 & 2.9449e-04	&	1.97	&	3.1918e-03	&	0.98	&	1.3274e-02	&	0.94\\
6 & 7.5985e-05	&	1.96	&	1.6675e-03	&	0.94	&	6.8175e-03	&	0.97\\
%7 & 2.0031e-05	&	1.93	&	8.9946e-04	&	0.89	&	3.5615e-03	&	0.94\\
%8 & 5.74174e-06	&	1.81	&	5.08498e-04	&	0.82	&	1.92185e-03	&	0.89\\
\hline
\end{tabular}
\caption{ \label{tab:columns-stencil-p1p1}
Column example using stabilized $P_1-P_1$ elements: relative errors obtained using the stencil-modified vs. the strain-based Stokes operator.}
\end{table}

Here, we not only compare the errors on the same mesh levels, but also with respect to a solution obtained on two subsequent refinements of the maximum considered mesh level. Table~\ref{tab:columns-stencil-finer-mesh-p1p1} shows the results for the stabilized $P_1 - P_1$ discretization. We observe a slightly reduced rate for the $L^2$ error of the velocity and the pressure-error, which may be due to the impact of the singularity on the asymptotic order of convergence. It shall also be noted that the difference between the two consistent formulations on the same mesh is roughly by a factor of five smaller than the discretization error that can be estimated with help of the high fidelity solution.

\begin{table}[ht]
\centering\small
\begin{tabular}{c||cc|cc|cc}
\hline
level & $\frac{\|\bs U_h-\bu_h\|_0}{\|\bs U_h\|_0}$ & rate & $\frac{\|\bs U_h-\bu_h\|_{\bs V}}{\|\bs U_h\|_{\bs V}}$ & rate & $\frac{\|P_h-p_h\|_Q}{\|P_h\|_Q}$ & rate \\
\hline
2 & 1.1073e-01	&	--	&	4.8731e-01	&	--	&	6.9077e-01	&	--	\\
3 & 2.0170e-02	&	2.57	&	1.3751e-01	&	1.90	&	1.5605e-01	&	2.24\\
4 & 5.7436e-03	&	1.85	&	6.9474e-02	&	1.01	&	8.2428e-02	&	0.94\\
5 & 1.5953e-03	&	1.86	&	3.4779e-02	&	1.01	&	4.3231e-02	&	0.94\\
6 & 4.2928e-04	&	1.90	&	1.7104e-02	&	1.03	&	2.2758e-02	&	0.93\\
%7 & 1.01256e-04	&	2.09	&	1.53450e-02	&	1.15	&	2.13967e-02	&	1.03\\
\hline
\end{tabular}
\caption{\label{tab:columns-stencil-finer-mesh-p1p1}
Column example using stabilized $P_1-P_1$ elements: relative errors obtained using the stencil-modified vs. the strain-based Stokes operator. For the reference we take a level 8 solution of the strain-based formulation.% (empty row).
}
\end{table}

\section{Massively parallel multigrid}
%!TEX root = ./2016_Viscosityjumps.tex
%
\label{sec:large}
In this section, we study the influence of
the specific operator formulation on the 
performance and scalability 
of an all-at-once multigrid method. Our solver considers the discrete saddle point problem
\begin{align*}
\begin{pmatrix}
A & B^\top \\ B & -C
\end{pmatrix}
\begin{pmatrix}
\bu_h \\ p_h
\end{pmatrix} =
\begin{pmatrix}
\bs{f}_h \\ 0
\end{pmatrix},
\end{align*}
where the submatrices are associated with nodal basis functions and the associated bilinear forms of the equal-order scheme.  Let us recall that only $A$ depends on the specific operator formulation, while $C$ depends on the stabilization that is employed, and $B$ denotes the weak divergence operator.
We use a variable multigrid $V_{{\rm var}}$-cycle with three pre- and post-smoothing steps  of  Uzawa-type, i.e., in the $(k+1)$th-smoothing step we solve
\begin{equation}\label{Uzawa_smooth}
\begin{aligned}
\mathbf{u}_{k+1} &= \mathbf{u}_k + \hat{A}^{-1} (\mathbf{f} - A \mathbf{u}_k - B^\top \mathbf{p}_k),\\
\mathbf{p}_{k+1} &= \mathbf{p}_k + \hat{S}^{-1} (B \mathbf{u}_{k+1} - C \mathbf{p}_k),
\end{aligned}
\end{equation}
where $\hat{A}$ and $\hat{S}$ denote suitable approximations for $A$ and the Schur-complement $S$, respectively; cf. \cite{zulehner_2002,schoeberl-zulehner_2003,gmeiner-huber-john-ruede-wohlmuth_2015}. In particular for $\hat{A}$, we use a symmetric hybrid parallel variant of a row-wise red-black colored Gauss-Seidel method and for $\hat{S}$, we select the forward variant of the Gauss-Seidel method applied to the stabilization matrix $C$, with  $\omega=0.3$ as under-relaxation factor. Standard restriction and prolongation operators define the transfer between the meshes.

Before comparing the formulations in the context of the all-at-once multigrid method, let us comment on the coarse grid solver. As long as a direct coarse grid solver is used, the use of $a_{\tr}(\cdot,\cdot)$ is not problematic. However, quite often, especially in case of moderate viscosity contrasts, preconditioned Krylov subspace solvers may give shorter run-times. Here we apply a block-diagonal preconditioned minimal residual (MINRES) method, where a Jacobi-preconditioned CG-iteration is used for the velocity block and a lumped mass-matrix for the pressure. The stopping criteria are selected such that the multigrid convergence rates do not deteriorate.

However, this preconditioner cannot be straightforwardly applied for $a_{\tr}(\cdot,\cdot)$. Therefore, we exchange the stencil-formulation by the strain-based Stokes formulation on the coarse grid and apply the preconditioned MINRES method there. Although the stencil-formulation is more efficient and faster than the strain-based Stokes formulation when iterating on finer grids, the coarse grid
performance is dominated by communication. Thus, we can choose the more expensive but positive definite strain-based Stokes formulation on the coarse mesh.

 A naive replacement  of  $a_{\tr}(\cdot,\cdot)$
by  $a_{\sym}(\cdot,\cdot)$ on the coarse level will fail.
We recall that $a_{\tr}(\cdot,\cdot)$ is only equivalent to $a_{\sym}(\cdot,\cdot)$ when $\div (\bu) = 0$. On the coarse grid, the right hand side  of the discrete mass conservative equation is typically non-zero. Thus, the two discrete coarse level systems
\begin{align*}
\begin{pmatrix}
A_{\tr} & B^\top \\ B & -C
\end{pmatrix}
\begin{pmatrix}
\bs{z} \\ \bs{q}
\end{pmatrix} =
\begin{pmatrix}
\bs{r}_1 \\ \bs{r}_2
\end{pmatrix}
 \quad \text{and} \quad
\begin{pmatrix}
A_{\sym} & B^\top \\ B & -C
\end{pmatrix}
\begin{pmatrix}
\bs{z} \\ \bs{q}
\end{pmatrix} =
\begin{pmatrix}
\bs{r}_1 \\ \bs{r}_2
\end{pmatrix}
\end{align*} 
do not approximate the same physical system.
Here all matrices are associated with the coarse level, and the subindex in $A$ reflects the selected bilinear form.
To account for this coarse grid inconsistency between the two systems, we modify the coarse grid right-hand side  of the momentum equation. Replacing $\bs{r}_1 $  by $\bs{r}_1 - B^{\top} M^{-1}\bs{r}_2$, where $M$ is the lumped mass matrix, i.e., a diagonal matrix, we find that
\begin{align*}
\begin{pmatrix}
A_{\tr} & B^\top \\ B & -C
\end{pmatrix}
\begin{pmatrix}
\bs{z} \\ \bs{q}
\end{pmatrix} =
\begin{pmatrix}
\bs{r}_1 \\ \bs{r}_2
\end{pmatrix}
 \quad \text{and} \quad 
\begin{pmatrix}
A_{\sym} & B^\top \\ B & -C
\end{pmatrix}
\begin{pmatrix}
\bs{z} \\ \bs{q}
\end{pmatrix} =
\begin{pmatrix}
\bs{r}_1 - B^\top M^{-1} \bs{r}_2 \\ \bs{r}_2
\end{pmatrix}
\end{align*} 
are approximations of the same physical system, and we can use the modified right hand side in combination with $A_{\sym}$ for the coarse mesh solver.

To test the multigrid performance, we use two different architectures and base
our implementation on the massively-parallel geometric multigrid framework HHG~\cite{BBergen_FHuelsemann_2004,bergen-gradl-ruede-huelsemann_2006,CPE:CPE2968,gmeiner-ruede-stengel-waluga-wohlmuth_2015,gmeiner-huber-john-ruede-wohlmuth_2015}.
Firstly, we consider
a two-socket server system equipped with 
Intel Xeon E5-2699 processors, 32 cores in total and 256\,GB main memory.
Due to the matrix-free aproach even on such a low-cost machine, we can solve for
$10^{9}$ DoF. Secondly, the IBM Blue Gene/Q supercomputing system JUQUEEN\footnote{www.fz-juelich.de} with a peak performance of more than 5.9 petaflop/s, currently ranked on position 11 of the TOP500 list (46th edition, November 2015) is used. On this system, Stokes flow problems with $10^{13}$ DoF can be considered \cite{gmeiner-huber-john-ruede-wohlmuth_2015}.
As test example, we reuse the model problem of Section\,\ref{ssub:visc_column}. 

 In Table~\ref{tab:timeANDconv}, we consider the refinement levels $L=5,6,7,8$ on the intel machine and level $L=2$ with 19\,652 DoF for the coarse grid. The asymptotic convergence rate $\rho$ for the residual reduction is approximated by using the convergence rate after 200 iterations with rescaling in each step. For all our settings, the convergence rates are robust with respect to the problem size  and are similar  valued.  Both, the modified stencil and the gradient-based formulations, show almost constant rates $\rho=0.241$ over the levels and are additional slightly better than the ones of the strain-based Stokes operator $\rho =0.255$ on $L=8$.

\begin{table}[h]
 \centering\scriptsize
\begin{tabular}{c||cc|cc|cc}
\hline
&\multicolumn{2}{c|}{strain-based form} & \multicolumn{2}{c|}{modified stencil form} & \multicolumn{2}{c}{gradient-based form}\\
\hline
level	&	    T [s]	&	$\rho$ 	&	T [s]	&	$\rho$ 	&	T [s]	&	$\rho$ \\
\hline
5	&	1.18 		&	0.296	 &	1.12 		&	 0.244	&	1.34  	&  0.240 \\
6	&	3.02 		&	0.280	&	2.45 		&	 0.242	&	2.64 		&  0.240 \\
7	&	13.95 	&	0.263	&	9.04 		&	 0.241	&	9.26 		&  0.241 \\			
8	&	93.74 	&	0.255	&	54.08 	&	 0.241	&	54.22 	&  0.241 \\
\hline
\end{tabular}
\caption{\label{tab:timeANDconv}
Time of one $V_{{\rm var}}(3,3)$-cycle and asymptotic convergence rates for the different formulations on the Intel machine}
\end{table}

 Increasing the refinement level, the run-times for the modified stencil and gradient-based form are up to a factor 1.73 faster than the strain-based Stokes operator. The modified stencil and the strain-based Stokes-operator need a similar number of coarse grid iterations to reach the stopping criteria, while the pure gradient-based operator requires more iterations such that the run-times are slightly higher than for the Stencil operator.

On JUQUEEN, we perform a weak scalability study and report the total time and the time without coarse grid solver (w.c.). For all three formulations, the run-times without coarse grid time stay almost constant in the weak scaling. Including the coarse grid, the run-times increase due to the suboptimal coarse grid solver.  In the weak scaling, the difference between the strain-based Stokes and modified stencil operator is between 7.65s and 8.78s. When neglecting the coarse grid, the difference increases to a factor of around 1.61 between the strain-based Stokes operator and the modified stencil and the gradient-based operator. 

\begin{table}[ht]
 \centering\scriptsize
\begin{tabular}{cc||cc|cc|cc}
 \hline
&&\multicolumn{2}{c|}{strain-based form} & \multicolumn{2}{c|}{modified stencil form} & \multicolumn{2}{c}{gradient-based form}\\
\hline
proc.	& DoF	&	    T [s]	&	T (w.c.) [s]	&T [s]	&	T (w.c.) [s]	 	&	T [s]	&	T (w.c.) [s]\\
\hline
128		&	$5.4\cdot 10^8$		&	25.83 &	21.92	&	18.18 &	14.31	 	& 18.61 	& 14.28	\\
768		&	$4.3\cdot 10^9$		&	32.04 &	21.93	&	23.26 &	13.90	 	& 25.44 	& 13.90	\\
8\,192	&	$3.4\cdot 10^{10}$	       &	39.37 &	21.33	&	31.50 & 	13.69	 	& 36.86 	& 13.27	\\
65\,536	&	$2.8\cdot 10^{11}$	       &	71.09 &	21.41	&	63.23 &	13.27	 	& 69.22 	& 13.26	\\
\hline
\end{tabular}
\caption{\label{tab:timeANDconv2}
Weak scaling of one $V_{{\rm var}}(3,3)$-cycle and asymptotic convergence rates for the different formulations on JUQUEEN. All timings are given in seconds.}
\end{table}

\section{Geophysical example}
%!TEX root = ./2016_Viscosityjumps.tex
%
\label{sec:geo}
In our last example, we study the influence of the different formulations on the solution quality for a simplified geophysical setup.
It is widely assumed that there is a huge viscosity contrast
between  the lower Earth's mantle and the asthenosphere, which is a mechanically
weak layer in the uppermost mantle; 
cf., e.g., \cite{haskell_35,mitrovica_96,hartley_11,parnell-turner_14}.
The precise depth of the asthenosphere is still unknown, although it is  accepted that depth and viscosity contrast are closely coupled. 
In order to take the Haskell constraint into account and to study the influence of two different asthenospheric depths $d_a  =660 $\,[km] and $d_a= 410$\,[km], we choose the piecewise constant viscosity
\begin{align}
\nu = 
  \begin{cases} 
    10^{22}\,\mbox{[Pa\,s]} & \text{for } r <  |\mathbf{x}| < R - d_a, \\
     10^{21} \left(\frac{d_a}{1000 [km]}\right)^3\,\mbox{[Pa\,s]} 
        & \text{for }  |\mathbf{x}| \geq R - d_a , \\
\end{cases}
\end{align}
see Weism\"uller~et~al.~\cite{weismueller_15}, resulting in a viscosity jump of $34.78$ and $145.09$, respectively.
Here, $R$ is
the radius of the Earth and $R-r$ is the thickness of the mantle close to $3000$ [km].
The right hand side of the incompressible Stokes system is given by a temperature driven  buoyancy term $(-\rho \mathbf{g})$, i.e. the product of a density field $\rho$~[kg/m$^3$] and
the gravitational acceleration within
the mantle $\mathbf{g}$~[m/s$^2$], which we prescribe as a vector of magnitude $10$~[m/s$^2$] 
pointing towards the center of the Earth. The density $\rho$ is 
obtained from a tomographic model of seismic wave speeds within the Earth 
\cite{grand_97}, converted to densities with the mineralogical model of 
Stixrude~et~al.~\cite{stixrude_05}; see also \cite{bauer-bunge-drzisga-gmeiner-huber-john-mohr-ruede-stengel-waluga-weismueller-wellein-wittmann-wohlmuth_2015}. 
At the outer boundary, Dirichlet conditions for the velocity are specified according to  tectonic plate data \cite{mueller_08}, and at the inner boundary free-slip conditions are assumed. Both types of boundary conditions yield $\bu \cdot \bs{n} =0$. 

For the rather large viscosity contrasts, which are considered here,
our coarse mesh solver consumes a considerable amount of the
computation time. Thus, we shall not compare performance figures here
as we did in the previous section, but only examine the impact on the
solution quality. An investigation of other coarse grid solvers,
combining algebraic and geometric multigrid techniques, can be found for instance
in the recent contribution of Rudi~et~al.~\cite{Rudi:2015:EIS:2807591.2807675}.

We perform our geophysical simulations using the Uzawa-multigrid
$V_{\text{\small var}}$--cycles introduced in Section~\ref{sec:large}
to solve the geophysical problem with the different
formulations. The initial mesh consists of an icosahedral mesh of the
spherical shell with $922,560$ tetrahedrons. This mesh, which resolves
the viscosity jump, is refined twice to obtain the coarse level mesh
and then four more times to build up the multilevel
hierarchy. Altogether, this results in a computational problem consisting of $2.5\cdot 10^9$ degrees of freedom.
The maximal velocities for each formulation and the differences to the strain-based formulation, which we again take as a reference, are reported in Table~\ref{tab:max_velocities}. The difference in the numbers are a first indicator that the gradient-based form yields non-physical results for this setup in the sense that these are not consistent with the strain-based model. In both settings, the maximal difference between our new formulation and the classical strain-based formulation is roughly $1\%$ whereas the gradient-based model results in a maximal difference of more than $10\%$.

\begin{table}[ht]\small\centering
\begin{tabular}{r||r|r|r||r|r}
\hline
  	& \multicolumn{3}{c||}{maximal velocity}	& \multicolumn{2}{c}{maximal difference}\\
\hline
	&	reference		&	mod. stencil	&	grad.-based	&	mod. stencil		&	grad.-based\\
\hline
$d_a = 660$ 	&	17.67	&	17.67	&	19.02	&	0.18		&	3.51\\
$d_a = 410$	&	28.13	&	28.12	&	30.10	&	0.17		&	3.24\\
\hline
\end{tabular}
\caption{\label{tab:max_velocities} Maximal velocity magnitudes (in [m/s]) for the different formulations and maximal absolute differences (in [m/s]) for asthenosphere depths $d_a = 660$ (left) and $d_a = 410$ .}
\end{table}

In the top row of Figure~\ref{fig:asthenosphere_study}, we present simulation
results for the strain-based operator and asthenosphere depths $d_a = 660$ (left) and $d_a = 410$ (right). We scale the color bar  to a maximum velocity of 15~[m/s] and observe that the highest velocities occur in the thin asthenospheric layer. The smaller $d_a$, the higher the maximal velocity is. For both $d_a$, a similar structure in the velocity distribution is obtained.  To get a better feeling for the different formulations and the effect of the physical inconsistent interface coupling in case of the gradient-based formulation, we also provide a scaled
relative point-wise error plot
\begin{equation}\label{eq:rel_error}
\text{Error}_k(x) = \frac{\|\bu_{\text{e}}(x)-\bu_{k}(x)\|}{\max(\|\bu_{\text{e}}(x)\|,5 \cdot 10^{-2}\|\bu_{\text{e}}\|_{L^{\infty}(\Omega)})}, 
\end{equation}
with $k \in \{\text{g},\text{s}\}$, denoting the gradient-based (g)
and the stencil-based (s) discrete solution, respectively. The coloring of  Error$_\text{g}$ is scaled to  50\% and of Error$_\text{s}$ to 2.5\%.
In the center row, Error$_\text{s}$ is shown, and  Error$_\text{g}$ is depicted in the third row.
For both $d_a$, the gradient-based form shows high differences triggered by the incorrect physical formulation at the interface and the inner boundary. We observe that this is  a global effect and pollutes into the interior of the lower mantle.
However, for the new stencil based formulation no inconsistency error enters at the interface, and thus the relative difference is quite small. For the larger asthenospheric depth, there is almost no visible difference while for the smaller layer moderate differences can be observed in regions where we have small velocities.

\begin{figure}[ht]
\begin{center}
\includegraphics[width=.4\textwidth]{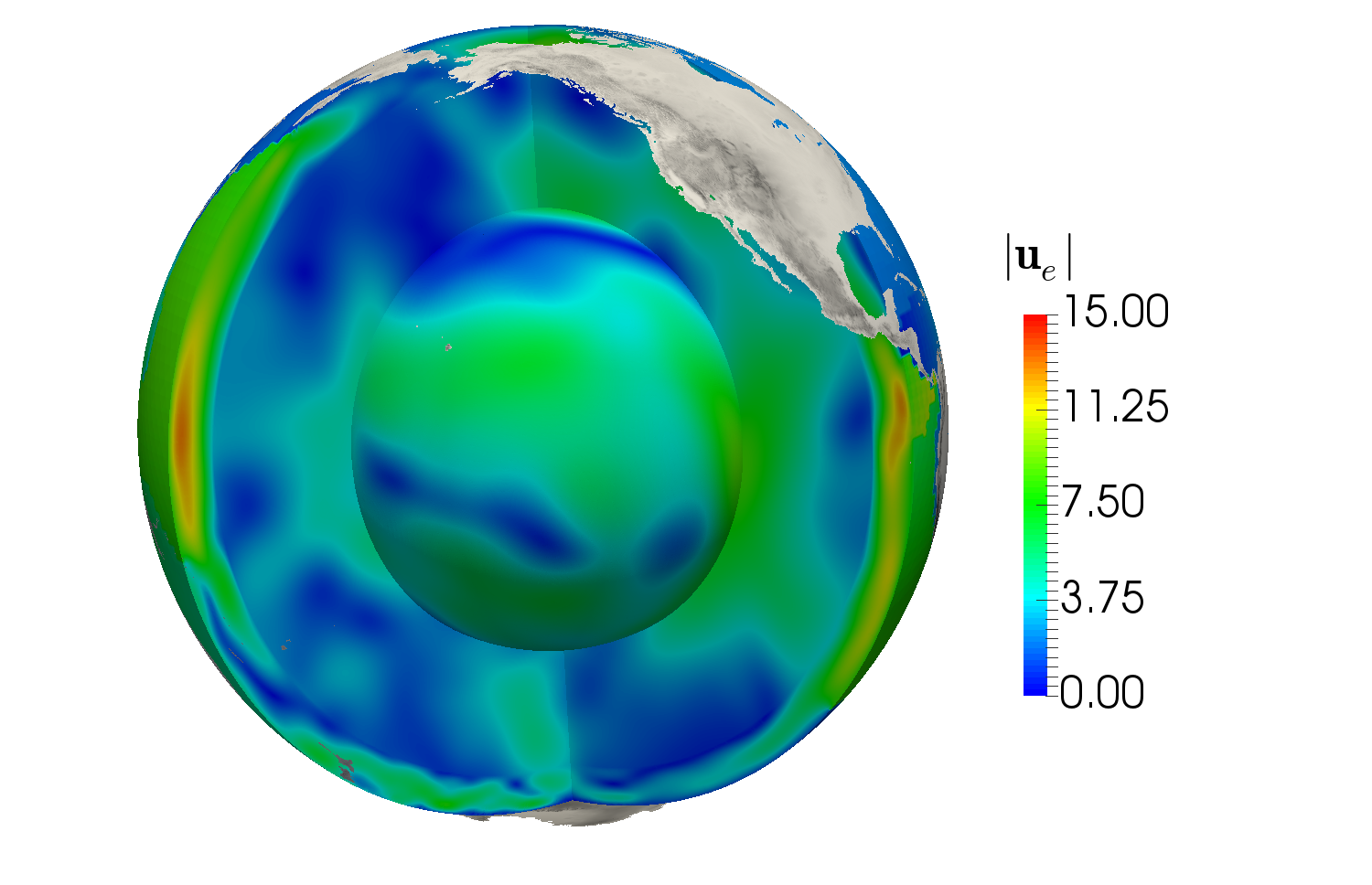}
\includegraphics[width=.4\textwidth]{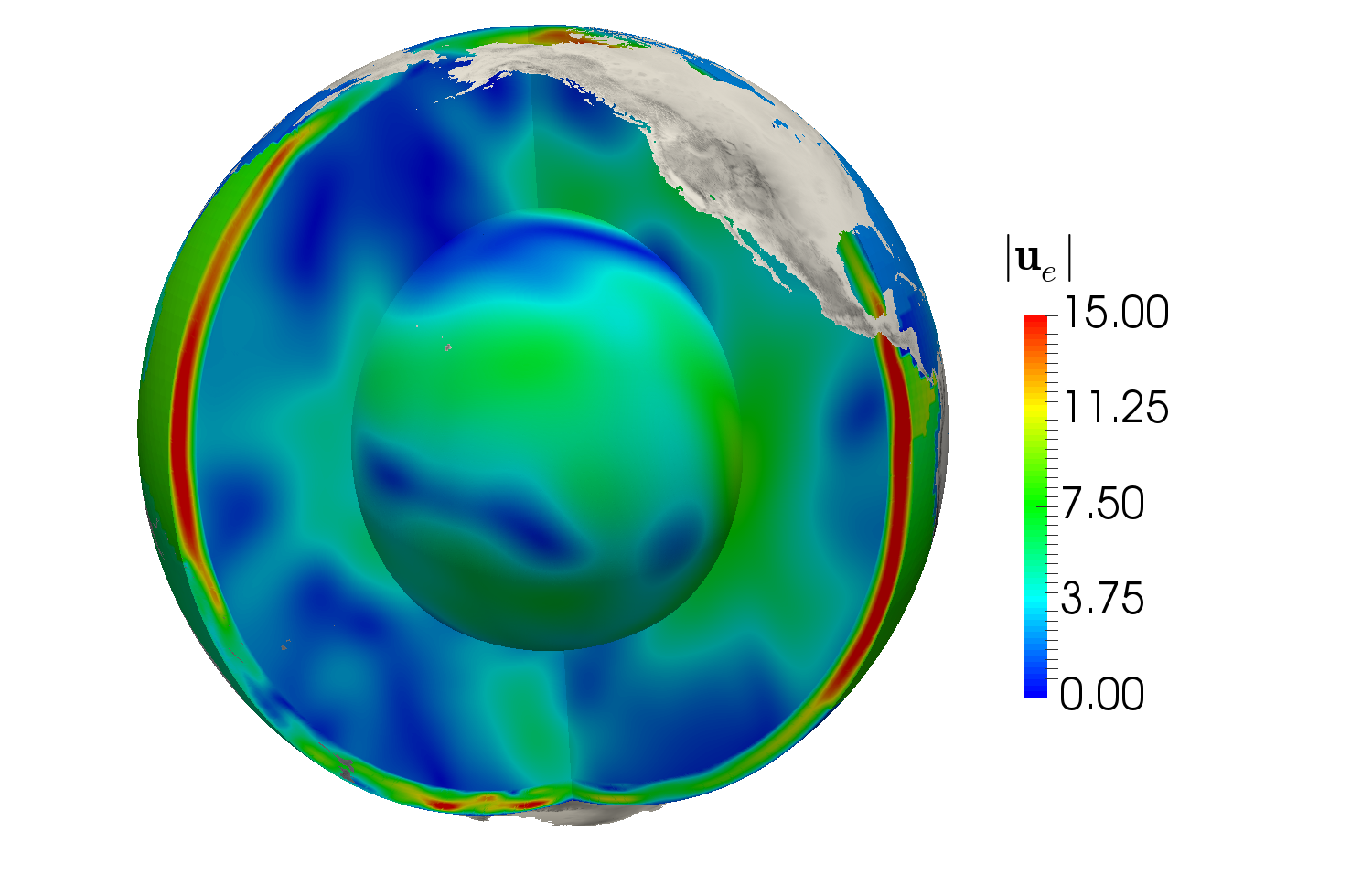}
\\
\includegraphics[width=.4\textwidth]{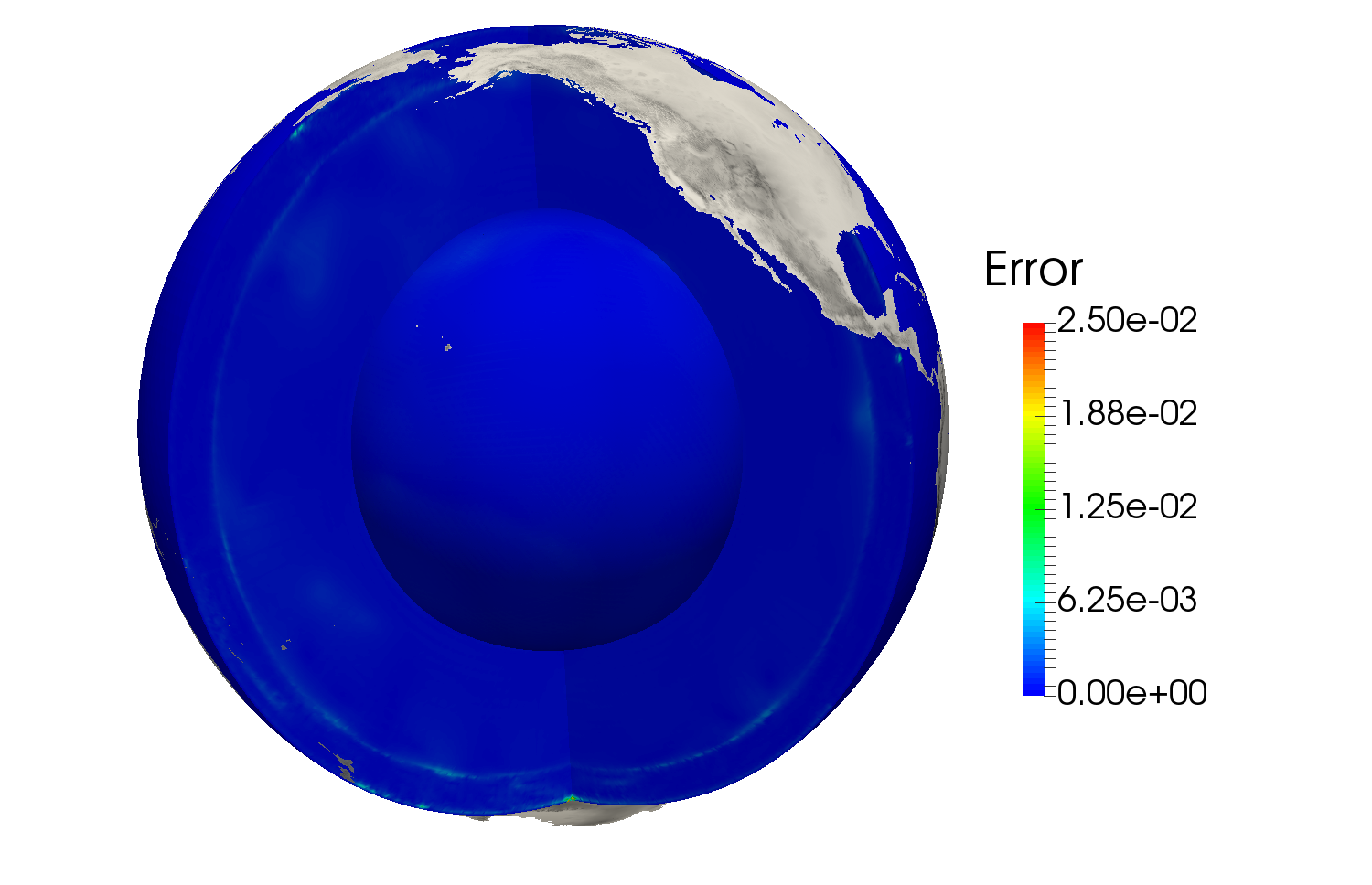}
\includegraphics[width=.4\textwidth]{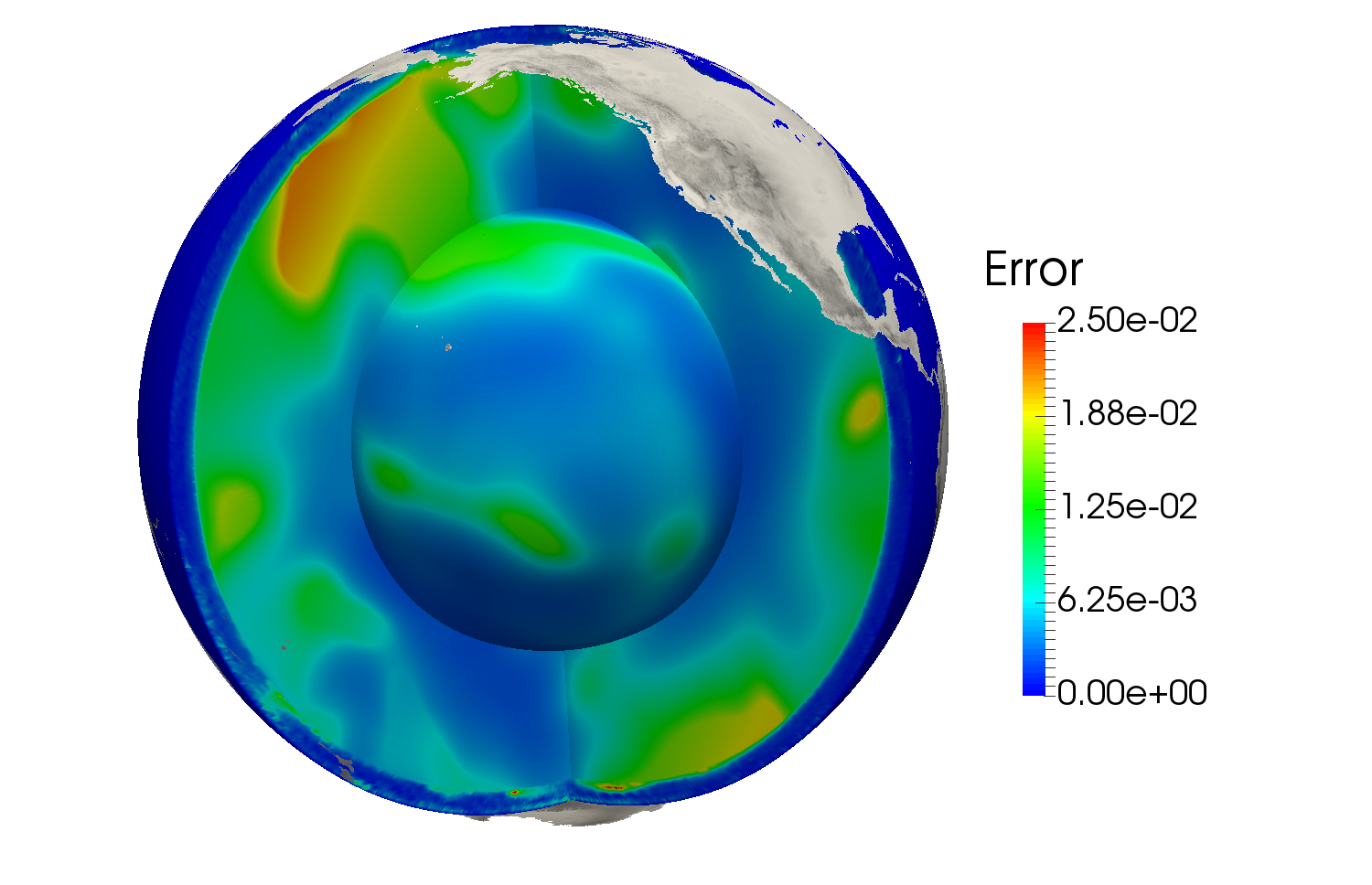}\\
\includegraphics[width=.4\textwidth]{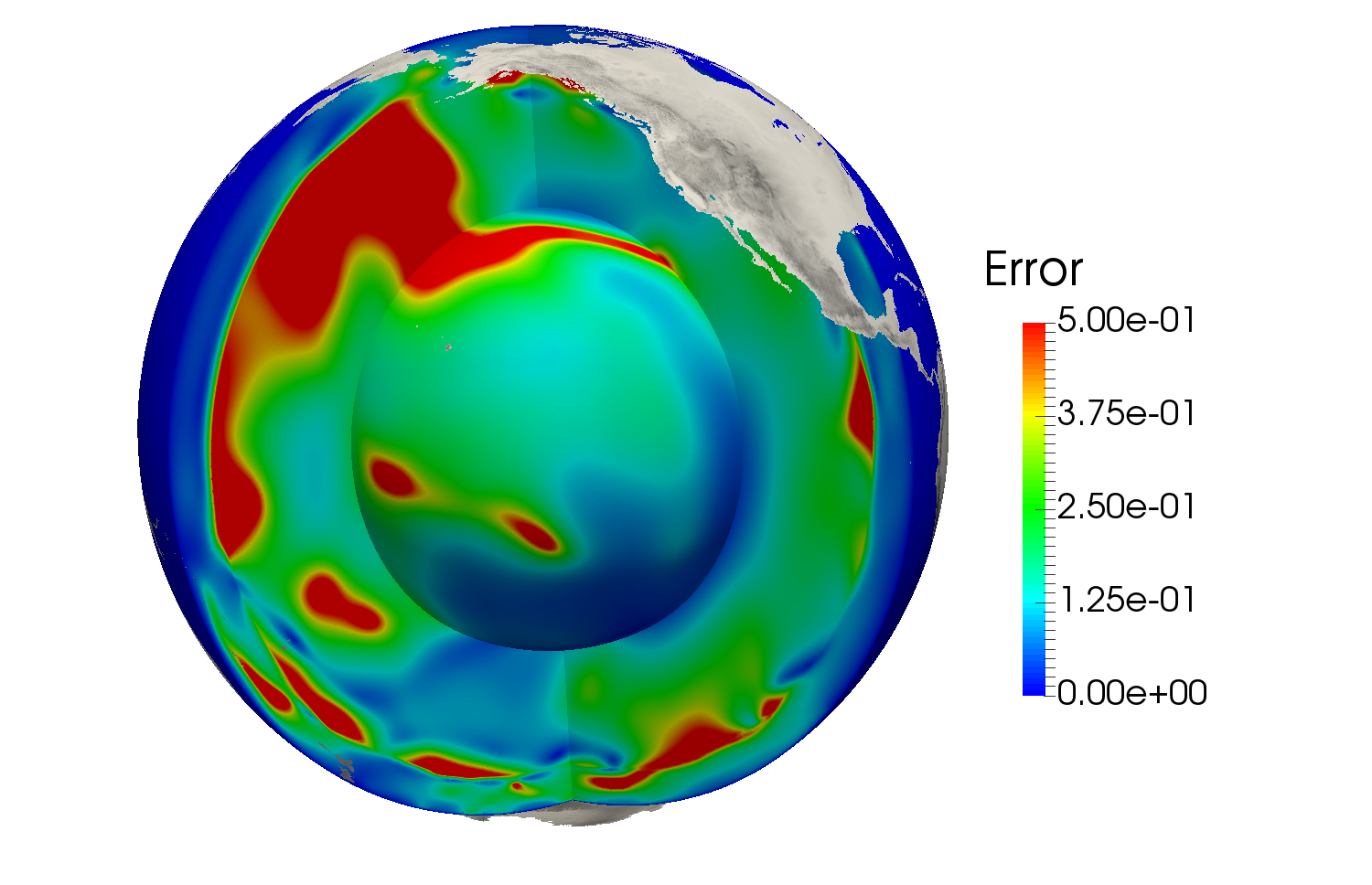}
\includegraphics[width=.4\textwidth]{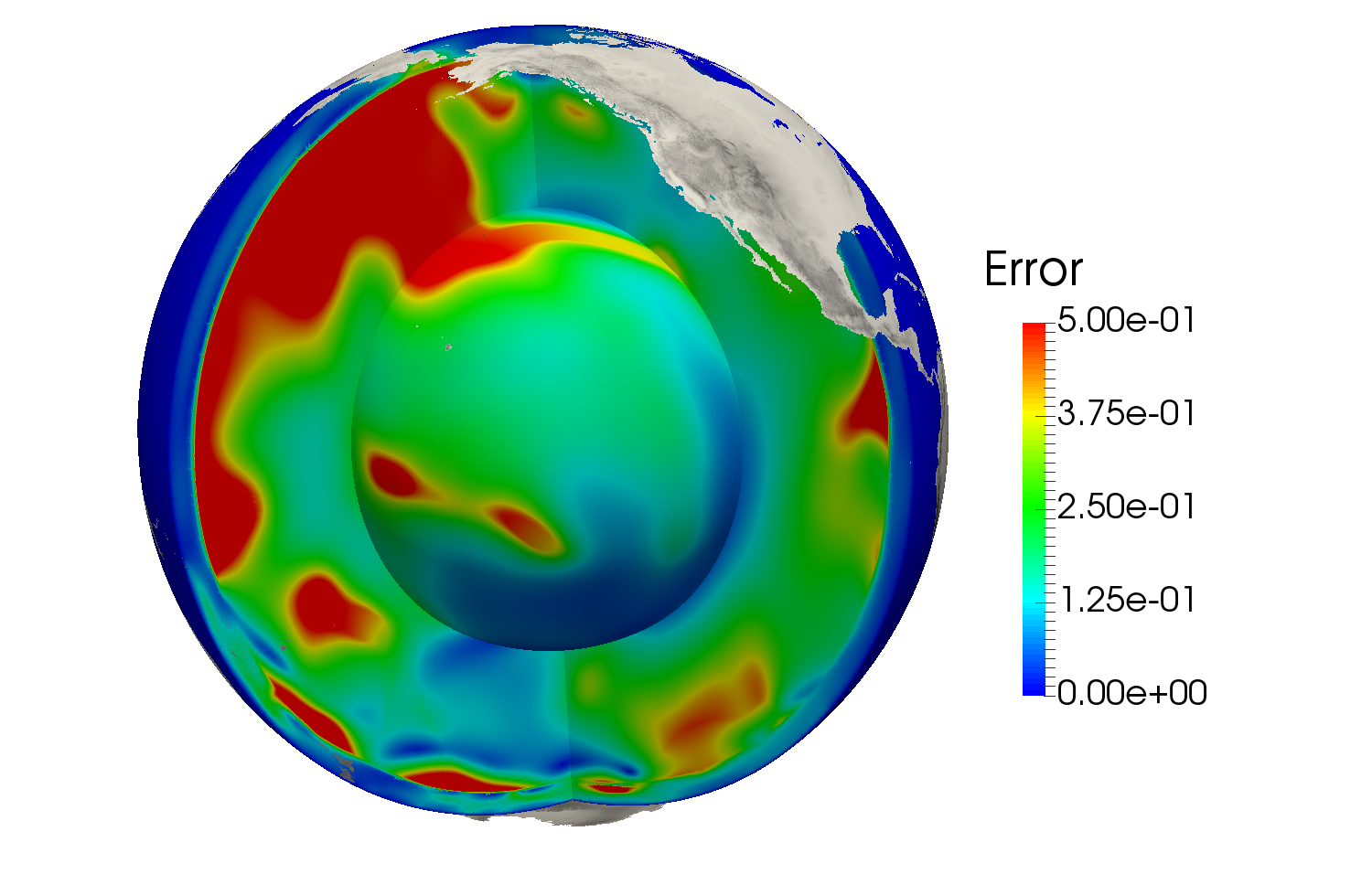}
\end{center}
\caption{\label{fig:asthenosphere_study}
Study of different asthenosphere depths $d_a = 660$ (left) and $d_a = 410$ (right): Velocity-magnitude of the strain-based Stokes formulation \eqref{eq:standard} (top). Relative error for the new stencil modification \eqref{eq:newbil}~(middle) and the gradient-based formulation~(bottom). 
}
\end{figure}

\subsubsection*{Acknowledgements}
This work was supported (in part) by the German Research Foundation (DFG) through the Priority Programme 1648 "Software for Exascale Computing" (SPP\-EXA). The authors gratefully acknowledge the Gauss Centre for Supercomputing (GCS) for providing computing time through the John von Neumann Institute for Computing (NIC) on the GCS share of the supercomputer JUQUEEN at J\"ulich Supercomputing Centre (JSC).
\bibliographystyle{abbrv}
\bibliography{literature}
\end{document}